\documentclass[11pt,oneside]{amsart}

\usepackage[a4paper, total={6in, 8in}]{geometry}

\usepackage{setspace}

\usepackage{amssymb}   

\usepackage{amsmath}

\usepackage{mathrsfs}

\usepackage{stmaryrd}

\usepackage{amsthm}
\usepackage{newlfont}
\usepackage{amscd}
\usepackage{mathtools}
\usepackage{bm}
\usepackage{tikz-cd}
\usepackage{graphicx}

\usepackage{hyperref}
\usepackage{framed}
\usepackage{comment}

\theoremstyle{definition}
\newtheorem{definition}{Definition}[section]

\theoremstyle{remark}
\newtheorem{remark}[definition]{Remark}

\theoremstyle{theorem}
\newtheorem{theorem}[definition]{Theorem}

\theoremstyle{corollary}

\theoremstyle{lemma}
\newtheorem{lemma}[definition]{Lemma}

\theoremstyle{example}
\newtheorem{example}[definition]{Example}

\theoremstyle{prop}
\newtheorem{prop}[definition]{Proposition}

\usepackage{enumitem}

\newcommand {\coker} {\operatorname{coker}}

\newcommand {\Coh} {\mathrm{Coh}}
\newcommand {\rank} {\operatorname{rank}}

\title{Modular variants of $p$-adic fundamental sequence}

\begin{document}
\author{Heng Du} 
\address{Yau Mathematical Sciences Center, Tsinghua University, Beijing 100084, China}
\email{hengdu@mail.tsinghua.edu.cn}
\author{Qingyuan Jiang} 
\address{Department of Mathematics,
The Hong Kong University of Science and Technology, Clearwater Bay, Kowloon, Hong Kong.} 
\email{jiangqy@ust.hk}
\author{Yucheng Liu} 
\address{College of Mathematics and Statistics, Center of Mathematics, Chongqing University, Chongqing, 401331, China}
\email{noahliu@cqu.edu.cn}
\subjclass[2020]{14H60, 11S20}
\begin{abstract} 
In this article, we relate any Farey triangle in the extended upper half-plane to a variant of Colmez--Fontaine's fundamental lemma in $p$-adic Hodge theory. In particular, their original fundamental lemma corresponds to the fundamental Farey triangle $(\frac{1}{0},\frac{1}{1},\frac{0}{1})$. 
\end{abstract}

\maketitle


\section{Introduction}
In $p$-adic Hodge theory, one of the most foundational results is that every weakly admissible filtered $(\phi, N)$-module is admissible. It implies that the $D_{\mathrm{st}}$ functor for semistable Galois representations is fully faithful. In other words, the linear-algebraic data of filtered $(\phi, N)$-modules are enough to characterize crystalline and semistable Galois representations. 

Several proofs of this foundational result exist in the literature (see \cite{constructionofsemistable, BanachColmezspaces, p-adicdifferentialequations, CrystallinerepresentationsandF-crystals, farguesfontaine-courbes}). In their original approach, one of the key ingredients in the proof of Colmez and Fontaine is the \emph{fundamental lemma in $p$-adic Hodge theory} (\cite[Proposition 2.1]{constructionofsemistable}). Later, this fundamental lemma was elegantly refined by Colmez using the language of Banach--Colmez spaces (\cite[Proposition 7.11]{BanachColmezspaces})\footnote{See also \cite[Lemma 2.4.1]{Analyticp-adicBanachspaces} for another version of the fundamental lemma.}. 

Recall that in \cite{BanachColmezspaces} (where they are termed \textit{Espaces Vectoriels de dimension finie}), a Banach--Colmez space is a functor on the category of sympathetic $\mathbb{C}_p$-algebras. Let $\mathbb{V}^1$ denote the Banach--Colmez space which evaluates to $\Lambda$ on any sympathetic algebra $\Lambda$ (see \cite[Example 7.1(ii)]{BanachColmezspaces}). Colmez's refined fundamental lemma can be stated as follows:

\begin{theorem}[{\cite[Proposition 7.11]{BanachColmezspaces}\footnote{In \cite[Proposition~2.1]{constructionofsemistable}, only a weak form of this result is used.}}]\label{thm: colmez intro}
   Let \(\mathbb{W}\) be a Banach--Colmez space admitting a presentation of the form
\[
0 \longrightarrow U \longrightarrow \mathbb{W} \xrightarrow{\,\,\alpha\,\,} \mathbb{V}^1
\longrightarrow 0,
\]
where \(U\) is a finite-dimensional \(\mathbb{Q}_p\)-Banach--Colmez Space. Let \(\psi: \mathbb{W} \longrightarrow \mathbb{V}^1\)
be a morphism of Banach--Colmez spaces. Then one of the following two alternatives holds.
\begin{enumerate}
    \item \(\psi\) is surjective, and \(\ker \psi\) is a finite-dimensional
\(\mathbb{Q}_p\)-Banach--Colmez Space whose \(\mathbb{Q}_p\)-dimension is $\dim_{\mathbb{Q}_p} U$;

\item \(\operatorname{Im}\psi\) is a finite-dimensional \(\mathbb{Q}_p\)-Banach--Colmez Space, and
\[
0 \longrightarrow U\cap \ker\psi
\longrightarrow \ker\psi
\xrightarrow{\,\,\alpha\,\,} \mathbb{V}^1
\longrightarrow 0
\]
is a presentation of \(\ker\psi\), with
\[
\dim_{\mathbb{Q}_p}(U\cap \ker\psi)
=
\dim_{\mathbb{Q}_p}U-\dim_{\mathbb{Q}_p}(\operatorname{Im}\psi).
\]
\end{enumerate}

\end{theorem}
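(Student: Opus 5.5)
We would prove this by first studying the composite $\psi$ restricted to the finite-dimensional piece $U$, and then reducing to the structure of endomorphisms of $\mathbb{V}^1$. Throughout we use Colmez's theory of Banach--Colmez spaces: every such space has a dimension pair $(\dim_{\mathbb{C}_p},\dim_{\mathbb{Q}_p})$, additive in short exact sequences. In particular $\operatorname{Dim}\mathbb{V}^1=(1,0)$, $\operatorname{Dim}U=(0,\dim U)$, and hence $\operatorname{Dim}\mathbb{W}=(1,\dim U)$. We also use that the image of a morphism into $\mathbb{V}^1$ is either all of $\mathbb{V}^1$ or a finite-dimensional $\mathbb{Q}_p$-space; this holds because a sub-Banach--Colmez space of $\mathbb{V}^1$ has $\mathbb{C}_p$-dimension $0$ or $1$, and in the latter case it is of finite $\mathbb{Q}_p$-codimension in a connected object, hence everything.

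\textbf{Dichotomy on the image.} The image $\operatorname{Im}\psi\subset\mathbb{V}^1$ falls into one of the two cases above.

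\textbf{Case 1: $\operatorname{Im}\psi=\mathbb{V}^1$.} Additivity gives $\operatorname{Dim}\ker\psi=(1,\dim U)-(1,0)=(0,\dim U)$. A space of $\mathbb{C}_p$-dimension $0$ is finite-dimensional over $\mathbb{Q}_p$, which is exactly alternative (1).

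\textbf{Case 2: $\operatorname{Im}\psi$ is finite-dimensional over $\mathbb{Q}_p$.} Here $\ker\psi$ has dimension $(1,\dim U-\dim\operatorname{Im}\psi)$. We must show that $\alpha|_{\ker\psi}\colon\ker\psi\to\mathbb{V}^1$ is surjective; its kernel is then visibly $U\cap\ker\psi$, and the dimension formula follows from additivity. The image $\alpha(\ker\psi)$ is a subspace of $\mathbb{V}^1$. If it were finite-dimensional over $\mathbb{Q}_p$, then $\ker\psi$ would be an extension of a finite-dimensional space by a subspace of $U$, so it would have $\mathbb{C}_p$-dimension $0$. This contradicts the computed $\mathbb{C}_p$-dimension $1$. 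Hence $\alpha(\ker\psi)=\mathbb{V}^1$ by the dichotomy, giving the presentation in alternative (2).

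\textbf{Main obstacle.} The key input is the dichotomy for subobjects of $\mathbb{V}^1$, together with the additivity and well-definedness of the dimension pair. Establishing these is the real content of Colmez's theory: it requires showing that a morphism from a Banach--Colmez space to $\mathbb{V}^1$ with infinite-dimensional image must be surjective. This rests on the rigidity that $\mathbb{V}^1$ has no nontrivial quotients by finite-dimensional $\mathbb{Q}_p$-subspaces that remain "small." We plan to cite these foundational facts from \cite{BanachColmezspaces} rather than reprove them. If a more hands-on route is needed, we would instead analyze $\psi|_U$ and use that $\operatorname{Hom}(\mathbb{V}^1,\mathbb{V}^1)=\mathbb{C}_p$: the map $\psi$ either factors through $\alpha$ up to a scalar or is killed after passing to a finite-dimensional quotient.
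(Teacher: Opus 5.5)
Your argument is correct, but it takes a genuinely different and much more economical route than the paper.

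The paper deduces the statement from Proposition~\ref{prop: real statement}, working through Le Bras' equivalence:
(i) it iterates the Farey-triangle lemma (Lemma~\ref{lemma:minimal triangle lemma}(3)) to split $\mathbb{W}\simeq\mathbb{E}_{1,n}\oplus V$ with $V$ a finite-dimensional $\mathbb{Q}_p$-space;
(ii) it uses the Harder--Narasimhan/lattice-point argument (Theorem~\ref{thm:SL2 variants}(2) for the fundamental triangle) to show that every nonzero map $\mathbb{E}_{1,n}\to\mathbb{V}^1$ is surjective with kernel $\underline{\mathbb{Q}_p}^{n}$.
The two alternatives are then the cases $\psi|_{\mathbb{E}_{1,n}}\neq 0$ and $\psi|_{\mathbb{E}_{1,n}}=0$.

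You need none of this structure. Your only inputs are additivity of $\operatorname{Dim}$, $\dim_{\mathbb{C}_p}\geq 0$, the fact that $\dim_{\mathbb{C}_p}=0$ forces a finite-dimensional $\mathbb{Q}_p$-space, and $\operatorname{Hom}(\mathbb{V}^1,\underline{\mathbb{Q}_p})=0$. Your image dichotomy follows from these exactly as you wrote it, so it is not an additional input. The opening plan about $\psi|_U$ and the closing remark about $\operatorname{Hom}(\mathbb{V}^1,\mathbb{V}^1)=\mathbb{C}_p$ can simply be dropped.

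What your route buys is that the lemma appears as a formal consequence of dimension theory together with $\dim_{\mathbb{C}_p}\mathbb{V}^1=1$. What the paper's route buys is the explicit shape of $\mathbb{W}$ and $\ker\psi$. More importantly, its argument extends to arbitrary Farey triangles (Theorem~\ref{thm:SL2 variants}). There, dimension counting cannot identify kernels and cokernels as $\mathbb{E}_{d,h}^{\oplus(k+1)}$. Nor can it give the image dichotomy once the target has $\mathbb{C}_p$-dimension at least $2$: multiplication by $t$ embeds $\mathbb{U}_{1,1}$ into $\mathbb{U}_{2,1}$ as a proper subobject.

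One caution: Colmez's own proof that $\operatorname{Dim}$ is well defined and additive may itself rely on the fundamental lemma. To avoid any circularity, take these facts from Le Bras' equivalence, under which $\operatorname{Dim}$ is $(\deg,\operatorname{rank})$ of the corresponding object of $\mathrm{Coh}^-(X_{FF})$. The paper, too, treats additivity of dimension as known.
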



A key observation of this paper is that Theorem~\ref{thm: colmez intro} admits a natural interpretation using the Fargues--Fontaine curve. Through Le Bras' equivalence \cite{LeBrasresult}, the category of Banach--Colmez spaces is equivalent to a tilted heart in the derived category of coherent sheaves on the absolute Fargues--Fontaine curve. Under this perspective, together with the Harder--Narasimhan theory on the absolute Fargues--Fontaine curve, the objects appearing in the fundamental lemma correspond to stable coherent sheaves of slopes $0=\frac{0}{1}$, $1=\frac{1}{1}$, and $\infty=\frac{1}{0}$. On the extended upper half-plane, these rational numbers precisely form the vertices of the fundamental Farey triangle $(\frac{1}{0},\frac{1}{1},\frac{0}{1})$.


\begin{figure}[htbp]
\centering
\begin{tikzpicture}[scale=2.7, line cap=round, line join=round]

\newcommand{\geodesic}[5][]{%
  \pgfmathsetmacro{\xa}{2*(#2)*(#3)/((#2)*(#2)+(#3)*(#3))}%
  \pgfmathsetmacro{\ya}{((#2)*(#2)-(#3)*(#3))/((#2)*(#2)+(#3)*(#3))}%
  \pgfmathsetmacro{\xb}{2*(#4)*(#5)/((#4)*(#4)+(#5)*(#5))}%
  \pgfmathsetmacro{\yb}{((#4)*(#4)-(#5)*(#5))/((#4)*(#4)+(#5)*(#5))}%
  \pgfmathsetmacro{\dett}{\xa*\yb-\ya*\xb}%
  \pgfmathsetmacro{\absdet}{abs(\dett)}%
  \ifdim \absdet pt < 0.0001pt
    \draw[#1] (\xa,\ya) -- (\xb,\yb);
  \else
    \pgfmathsetmacro{\cx}{(\yb-\ya)/\dett}%
    \pgfmathsetmacro{\cy}{(\xa-\xb)/\dett}%
    \pgfmathsetmacro{\rr}{sqrt(max(0,\cx*\cx+\cy*\cy-1))}%
    \pgfmathsetmacro{\anga}{atan2(\ya-\cy,\xa-\cx)}%
    \pgfmathsetmacro{\angb}{atan2(\yb-\cy,\xb-\cx)}%
    \pgfmathsetmacro{\delta}{\angb-\anga}%
    \ifdim \delta pt > 180pt
      \pgfmathsetmacro{\angb}{\angb-360}%
    \fi
    \ifdim \delta pt < -180pt
      \pgfmathsetmacro{\angb}{\angb+360}%
    \fi
    \draw[#1] (\xa,\ya) arc[start angle=\anga, end angle=\angb, radius=\rr];
  \fi
}

\newcommand{\fraclabel}[4][]{%
  \pgfmathsetmacro{\x}{2*(#2)*(#3)/((#2)*(#2)+(#3)*(#3))}%
  \pgfmathsetmacro{\y}{((#2)*(#2)-(#3)*(#3))/((#2)*(#2)+(#3)*(#3))}%
  \fill (\x,\y) circle (0.012);
  \node[#1, font=\scriptsize, inner sep=1pt]
    at ({1.13*\x},{1.13*\y}) {$#4$};
}

\fill[gray!5] (0,0) circle (1);
\draw[thick] (0,0) circle (1);

\begin{scope}
\clip (0,0) circle (1.002);

\foreach \p in {-4,...,4}{
  \geodesic[gray!55, line width=0.25pt]{\p}{1}{1}{0}
}

\foreach \q in {1,...,4}{
  \foreach \p in {-4,...,4}{
    \foreach \s in {1,...,4}{
      \foreach \r in {-4,...,4}{
        \pgfmathtruncatemacro{\detfarey}{abs(\p*\s-\q*\r)}
        \pgfmathtruncatemacro{\ordered}{(\p*\s < \r*\q) ? 1 : 0}
        \ifnum\detfarey=1
          \ifnum\ordered=1
            \geodesic[gray!55, line width=0.25pt]{\p}{\q}{\r}{\s}
          \fi
        \fi
      }
    }
  }
}

\fill[blue!12, opacity=0.6]
  (0,-1)
  arc[start angle=180, end angle=90, radius=1]
  arc[start angle=-90, end angle=-180, radius=1]
  -- cycle;

\geodesic[blue!70!black, very thick]{0}{1}{1}{1}
\geodesic[blue!70!black, very thick]{1}{1}{1}{0}
\geodesic[blue!70!black, very thick]{0}{1}{1}{0}

\geodesic[blue!70!black, thick]{1}{1}{2}{1}
\geodesic[blue!70!black, thick]{2}{1}{1}{0}
\geodesic[blue!70!black, thick]{1}{1}{3}{2}
\geodesic[blue!70!black, thick]{3}{2}{2}{1}
\geodesic[blue!70!black, thick]{-1}{1}{0}{1}
\geodesic[blue!70!black, thick]{-1}{1}{1}{0}

\end{scope}

\draw[thick] (0,0) circle (1);

\fraclabel[below]{0}{1}{0}
\fraclabel[right]{1}{1}{1}
\fraclabel[above]{1}{0}{\infty}

\fraclabel[above right]{2}{1}{2}
\fraclabel[right]{3}{2}{3/2}
\fraclabel[below right]{1}{2}{1/2}

\fraclabel[left]{-1}{1}{-1}
\fraclabel[above left]{-2}{1}{-2}
\fraclabel[left]{-3}{2}{-3/2}

\end{tikzpicture}

\caption{A finite portion of the Farey tessellation in the Poincar\'e disc.}
\label{fig:farey-poincare-disc}
\end{figure}

Motivated by this insight, we show that this phenomenon is not isolated to the slopes $0, 1$, and $\infty$. Our main result is a generalization of the fundamental lemma to \textit{any} Farey triangle in the extended upper half-plane:

\begin{theorem}[Main theorem, weak version]\label{thm: main in intro}
For $\lambda = h/d \in \mathbb{Q}_\infty$\footnote{We will always assume $\lambda = h/d$ is in its simplest form, that is $(d,h)\in \mathbb{N}_{> 0}\times {\mathbb{Z}}$ with $d=\min\{a\in \mathbb{N}_{>0} \mid a\lambda \in \mathbb{Z}\}$ when $\lambda \neq \infty$. We write $\infty = 1/0$, and $\infty > 0$.} with $\lambda \geq 0$, we define a Banach--Colmez space $\mathbb{U}_\lambda$ as follows: for $\lambda > 0$, let $\mathbb{U}_{\lambda}=(\mathbb{B}_{\mathrm{cris}}^+)^{\varphi^h=p^d}$ be the \emph{effective} Banach--Colmez space $\mathbb{U}_{d,h}$ as in \cite{BanachColmezspaces} (see also Example~\ref{eg:BC:period.rings}), and set $\mathbb{U}_0 = \mathbb{V}^1$. For $\lambda_1, \lambda_2, \lambda_3 \in \mathbb{Q}_\infty$ with $\lambda_i \ge 0$, consider a complex of Banach--Colmez spaces
\begin{equation}\label{eq: fundamental exact seq}
\mathbb{U}_{\lambda_1} \longrightarrow \mathbb{U}_{\lambda_2} \xrightarrow{\,\,\psi\,\,} \mathbb{U}_{\lambda_3}.   
\end{equation}
We call (\ref{eq: fundamental exact seq}) a \emph{fundamental exact sequence} if it satisfies a property similar to that in Theorem~\ref{thm: colmez intro}, that is, if $\psi \neq 0$, then it is surjective, with kernel isomorphic to $\mathbb{U}_{\lambda_1}$. Then (\ref{eq: fundamental exact seq}) is a fundamental exact sequence if $(\lambda_1,\lambda_2,\lambda_3)$ is an ordered Farey triangle (see Definition~\ref{def:Farey-triangle} for the precise definition, and Figure~\ref{fig:farey-poincare-disc} for an illustration).
\end{theorem}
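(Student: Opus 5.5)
Throughout, $\lambda_i=h_i/d_i$ in lowest terms, with $\infty=1/0$. The plan is to transport everything to the absolute Fargues--Fontaine curve $X$ via Le Bras' equivalence. Under it, $\mathbb{U}_\lambda$ for $0<\lambda<\infty$ is $H^0(X,\mathcal{O}(\lambda))$, i.e.\ the stable bundle $\mathcal{O}(\lambda)$ placed in the heart. The space $\mathbb{U}_\infty=\mathbb{U}_{1,0}$, which is $\mathbb{B}_{\mathrm{cris}}^{+,\varphi=1}=\mathbb{Q}_p$ up to the conventions of Example~\ref{eg:BC:period.rings}, is $\mathcal{O}$ in one normalization. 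Likewise $\mathbb{V}^1=\mathbb{U}_0$ corresponds to the skyscraper $i_{\infty*}C$. The cleanest bookkeeping uses the slope convention of the introduction, in which $\mathbb{V}^1,\mathbb{Q}_p,\mathbb{U}_{1,1}$ have slopes $\infty,0,1$. So I would first fix a dictionary $\lambda\mapsto E_\lambda$ to stable objects of the tilted heart, via an $\mathrm{SL}_2(\mathbb{Z})$-type reparametrization sending the Farey triangle $(\frac10,\frac11,\frac01)$ to the slopes of $(\mathbb{Q}_p,\mathbb{U}_{1,1},\mathbb{V}^1)$. Farey triangles are preserved by such a Möbius change, and in the new coordinates each $E_\lambda$ is a stable coherent sheaf (a stable bundle or the torsion sheaf at $\infty$) with rank/degree given by a unimodular vector. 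An ordered Farey triangle $(\lambda_1,\lambda_2,\lambda_3)$ then means: the rank--degree vectors satisfy $v_2=v_1+v_3$, with $\det(v_1,v_3)=1$.

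Next, I would show that $\mathrm{Hom}(E_{\lambda_2},E_{\lambda_3})$ is nonzero. For any nonzero $\psi$, I would argue as follows. Since $E_{\lambda_2},E_{\lambda_3}$ are stable with slopes in the right order and $\det(v_2,v_3)=1$, the image $I$ of $\psi$ is a nonzero subsheaf of $E_{\lambda_3}$ and a quotient of $E_{\lambda_2}$. Stability gives $\mu(E_{\lambda_2})\le\mu(I)\le\mu(E_{\lambda_3})$. The cokernel is torsion or of smaller rank, and the unimodularity $\det(v_2,v_3)=1$ forbids any lattice point strictly inside the triangle $0,v_2,v_3$ (Pick). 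Hence $v(I)\in\{v_2,v_3\}$. It cannot be $v_2$ unless $\psi$ is injective with cokernel of vector $v_3-v_2=-v_1$; that cokernel would have negative rank or degree, which is impossible unless it is $0$. In the non-degenerate case this forces $v(I)=v_3$, so $I=E_{\lambda_3}$ by stability (a subsheaf with the same rank and degree is equal) and $\psi$ is surjective. The kernel $K$ then has vector $v_1$.

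The last step is to identify $K\simeq E_{\lambda_1}$. Here $K$ is a subsheaf of the stable $E_{\lambda_2}$, so all its HN slopes are $\le\mu_2$. If $K$ were not semistable, its maximal destabilizing subsheaf $K'$ would have vector strictly inside the cone between $v_1$ and $v_2$ with $v(K')$ and $v(K/K')$ lattice vectors. The Farey condition $\det(v_1,v_2)=1$ excludes this, again by a Pick/no-interior-lattice-point argument. So $K$ is semistable with primitive vector $v_1$, hence stable. On the Fargues--Fontaine curve a stable sheaf is determined by its rank and degree (classification of vector bundles, and torsion at a point for the $\mathbb{V}^1$ case), so $K\simeq E_{\lambda_1}$. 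Translating back through Le Bras' equivalence, exactness in the heart gives the fundamental exact sequence.

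I expect the main obstacle to be the tilted heart. Le Bras' heart is a tilt, so "subobject" and "quotient" there do not coincide with coherent-sheaf notions when $\mathbb{V}^1$ (torsion) and negatively-sloped pieces interact. The slope inequalities must therefore be run for a stability function on the heart, namely a Bridgeland-type central charge $Z=-\deg+i\,\mathrm{rk}$ rotated. I would first verify that each $\mathbb{U}_\lambda$ is stable for this central charge and that HN filtrations exist in the heart. The lattice-point arguments then go through verbatim with phases in place of slopes. Separately, the edge cases containing $\infty$ or $0$ need a direct check, e.g.\ $\mathrm{Hom}(\mathbb{V}^1,\mathbb{Q}_p)=0$.
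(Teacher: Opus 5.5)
Your proposal is correct and is essentially the paper's argument. You transport through Le Bras' equivalence to the stable sheaves $\mathcal{O}(d_i/h_i)$ (or the skyscraper at $\infty$ when $\lambda_3=0$), use stability plus Pick's theorem on the unimodular triangles to force surjectivity and a semistable kernel with primitive vector $v_1$, and finish with the Fargues--Fontaine classification; this is the proof of the paper's minimal-triangle lemma, written there for part (1), with part (2) being the case you need.

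The Bridgeland-type detour in your last paragraph is unnecessary, and note that $\mathbb{U}_\lambda$ corresponds to $\mathcal{O}(1/\lambda)$, not $\mathcal{O}(\lambda)$. For $\lambda_i\ge 0$ all three objects are coherent sheaves sitting in degree $0$ of the heart, so $\psi$ is just a map of sheaves. The resulting sequence $0\to\mathcal{O}(d_1/h_1)\to E_{\lambda_2}\to E_{\lambda_3}\to 0$ is a distinguished triangle with all vertices in the heart, hence already a short exact sequence there.
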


Theorem~\ref{thm: main in intro} is only a weak form of the main result. Later we will state a stronger version, closer in form to Theorem~\ref{thm: colmez intro}; in particular, the effectivity (non-negativity) condition on the slopes will no longer be imposed. See Theorem~\ref{thm:SL2 variants}. 

\begin{remark}
The converse statement of Theorem~\ref{thm: main in intro} also holds: if a sequence of the form
\eqref{eq: fundamental exact seq} is fundamental, then the ordered triple
$(\lambda_1,\lambda_2,\lambda_3)$ is necessarily a Farey triangle; see
Theorem 4.3 of \cite{Continuumenvelops}. We postpone the discussion on this converse to \cite{Continuumenvelops}.
\end{remark}

\begin{remark}
In Theorem~\ref{thm: main in intro}, the parameter $\lambda=h/d$ follows the same convention as the Farey
triangles in Definition~\ref{def:Farey-triangle}. This convention is reciprocal
to the usual slope $d/h$ of the corresponding vector bundle
$\mathcal{O}(d/h)$. It is natural from the point of view of the tilted heart
and is also compatible with the dimension-vector notation for
Banach--Colmez spaces; see Remark~\ref{rmk:Farey-triangle-convention}.
\end{remark}

To establish Theorem~\ref{thm: main in intro}, we rely on two main ingredients. The first is Fargues and Fontaine's classification theorem of vector bundles on Fargues--Fontaine curves \cite{farguesfontaine-courbes}. The second is Le Bras' equivalence \cite{LeBrasresult} that the category of Banach--Colmez spaces is equivalent to a tilted heart in the derived category of coherent sheaves on the absolute Fargues--Fontaine curve.

\begin{remark}
Theorem~\ref{thm: main in intro} and its general form naturally extend to the case of $\mathbb{B}_{\mathrm{cris},E}^+$ where $E$ is a finite unramified field extension of $\mathbb{Q}_p$. This generalization follows directly from the fact that our two primary ingredients remain valid in this setting: Fargues--Fontaine's classification of vector bundles on the curve defined for arbitrary $E$, and Le Bras' equivalence between the category of Banach--Colmez spaces and a tilted heart in the derived category of coherent sheaves. 
\end{remark}

\subsection*{Organization of the paper} In \S 2, we recall basic definitions and results of Banach--Colmez spaces and Fargues--Fontaine curves, including Le Bras' equivalence. We also prove a derived version of Le Bras' equivalence in this section. In \S 3, we prove a variant of the fundamental lemma for any Farey triangle in the extended upper half-plane. 

\subsection*{Notation} We use $\frac{1}{0}$ to denote the infinite point in the extended upper half plane, and we regard it as positive by convention. We use $\mathbb{Q}_{\infty}$ to denote the union of rational numbers $\mathbb{Q}$ and infinity $\frac{1}{0}.$ We always express a rational number in its reduced form with positive denominator.

In this paper, we fix a prime number $p$, and let $\mathbb{Q}_p$ be the $p$-adic completion of $\mathbb{Q}$.  Let $\mathbb{Q}_{p^h}$ be the unique unramified extension of $\mathbb{Q}_p$ with residue field $\mathbb{F}_{p^h}$, and $\Breve{\mathbb{Q}}_p$ be the $p$-adic completion of the maximal unramified extension of $\mathbb{Q}_p$. 

We follow the notations in \cite{farguesfontaine-courbes} for Fontaine's period rings. We use $t=\log([\epsilon])$ to denote the $p$-adic analog of $2\pi i$, where $\epsilon=(1,\zeta_p, \zeta_{p^2}, \cdots)$ is a system of compatible $p^n$-th roots of unity, and $[\epsilon]$ is its Teichm\"uller lift.

\medskip
\noindent
\textbf{Acknowledgments.} The authors would like to thank Sebastian Bartling and Laurent Fargues for many helpful discussions.

\section{Banach--Colmez spaces and Fargues--Fontaine curves}

In this section, we will briefly summarize some basic definitions and facts for Banach--Colmez spaces and Fargues--Fontaine curves. For full details, we refer the reader to the references listed below.
\subsection{Banach--Colmez spaces}
We fix $C$ a complete algebraic closed field extension of $\mathbb{Q}_p$. $C$ is a non-Archimedean field with ring of integers $\mathcal{O}_C$ and residue field $k$. $C$ is a perfectoid field in the sense of \cite[Definition 1.2]{scholze-perfectoidspacesIHES}; we let $C^\flat$ be the tilt. In particular, $C^\flat$ is a non-Archimedean field over $\mathbb{F}_p$ with ring of integers $\mathcal{O}_C^\flat$. We write $A_{\mathrm{inf}}=A_{\mathrm{inf}}(\mathcal{O}_C)=W(\mathcal{O}_C^\flat)$\footnote{For a finite extension \(E\) of \(\mathbb{Q}_{p}\), using the theory of \(\mathcal{O}_{E}\)-Witt vectors, one can similarly define the period rings \(A_{\mathrm{inf},E}\), \(A_{\mathrm{cris},E}\), etc.} be the infinitesimal period ring of Fontaine. There is a canonical surjection $\vartheta\colon A_{\mathrm{inf}} \to \mathcal{O}_C$ such that $\ker(\vartheta)$ is principally generated by $\xi \in A_{\mathrm{inf}}$. Denote $A_{\mathrm{cris}}$ as the $p$-complete PD envelope of $A_{\mathrm{inf}}$ with respect to $\ker(\vartheta)$, and define $B_{\mathrm{cris}}^+=A_{\mathrm{cris}}[1/p]$. The map $\vartheta$ extends to a map $\vartheta: B_{\mathrm{cris}}^+ \to C$. We define $B_{\mathrm{dR}}^+$ to be the $(\xi)$-adic completion of $A_{\mathrm{inf}}[1/p]$. Let $\Breve{\mathbb{Q}}_p=W(\overline{\mathbb{F}}_p)[1/p]$, which carries a Frobenius endomorphism $\varphi$ lifting the $p$-th power map modulo $p$. Since $k$ is algebraically closed, there is a canonical inclusion $\overline{\mathbb{F}}_p \to k$, which induces a map $\Breve{\mathbb{Q}}_p \to B_{\mathrm{cris}}^+$. 

Let $K = C$ or $C^\flat$. We denote the big pro-\'{e}tale site of $K$ as $\mathrm{Perf}_{K, \text{pro\'{e}t}}$ and the $v$-site of $K$ as $\mathrm{Perf}_{K, v}$ (we refer to \cite{scholze2017etale} or \cite[\S 2.1]{LeBrasresult} for the definitions of pro-\'{e}tale and $v$-topology). A theorem of Scholze states that there are canonical equivalences $\mathrm{Perf}_{C, \text{pro\'{e}t}} \simeq \mathrm{Perf}_{C^\flat, \text{pro\'{e}t}}$ and $\mathrm{Perf}_{C, v} \simeq \mathrm{Perf}_{C^\flat, v}$ (see \cite{scholze2017etale}  or \cite[Theorem 2.7]{LeBrasresult}). Let $\mathcal{S}\mathrm{hv}_{\text{pro\'{e}t}}(C; \mathbb{Q}_p)$ (resp. $\mathcal{S}\mathrm{hv}_{v}(C; \mathbb{Q}_p)$) denote the abelian category of sheaves of $\mathbb{Q}_p$-vector spaces on $\mathrm{Perf}_{C,\text{pro\'{e}t}}$ (resp. on $\mathrm{Perf}_{C,v}$) and let $D_{\text{pro\'{e}t}}(C; \mathbb{Q}_p)$ (resp. $D_{v}(C; \mathbb{Q}_p)$) denote its $\infty$-categorical derived category.

\begin{example}\label{example.BC.Q_p.C:v-sheaves}
\begin{enumerate}
	\item The constant presheaf $\underline{\mathbb{Q}_{p^h}}$ is a sheaf for the $v$-topology (and hence also for the pro-{\'e}tale topology); see \cite[Proposition 2.9]{LeBrasresult}.
	\item The presheaf $\mathcal{O}_C$ which associates each $S \in \mathrm{Perf}_{C}$ to $\mathcal{O}_S(S)$ is a sheaf for the $v$-topology (and hence also for the pro-{\'e}tale topology); see \cite[Theorem 8.7]{scholze2017etale}.
\end{enumerate}
\end{example}

\begin{definition}[{\cite[Definition 2.11]{LeBrasresult}}]
Let $\mathcal{BC} \subseteq \mathcal{S}\mathrm{hv}_{\text{pro{\'e}t}}(C; \mathbb{Q}_p)$ (resp. $\mathcal{BC}' \subseteq \mathcal{S}\mathrm{hv}_{v}(C; \mathbb{Q}_p)$) denote the smallest abelian subcategory that is stable under extensions and contains the sheaves $\underline{\mathbb{Q}_p}$ and $\mathbb{G}_a$.
\end{definition}

Note that Banach--Colmez spaces were originally defined on sympathetic algebras (see \cite{BanachColmezspaces}). Indeed, a \emph{sympathetic $C$-algebra} $\Lambda$ is a Banach $C$-algebra equipped with the spectral norm $\Vert \cdot \Vert$ such that $\lambda \mapsto \lambda^p$ is surjective on $\{\lambda \mid \Vert \lambda -1 \Vert <1\}$ (\cite[\S 5]{BanachColmezspaces}). By definition, sympathetic algebras are perfectoids. A Banach--Colmez space is a functor
from the category of sympathetic $C$-algebras to the category of $\mathbb{Q}_p$-Banach spaces. A morphism of Banach--Colmez spaces is a natural transformation of functors. These Banach--Colmez spaces form a category. 

By \cite[Proposition 7.11 \& Remark 2.12]{LeBrasresult}, we know that this category is equivalent to $\mathcal{BC}$ and $\mathcal{BC}'$. There are also notions of dimension and height on Banach--Colmez spaces, and these are additive on the short exact sequences of Banach--Colmez spaces (see {\cite[\S 7]{BanachColmezspaces}} for more details).

Fontaine's constructions of period rings (e.g. $B_{\mathrm{cris}}^+$ and $B_{\mathrm{dR}}^+$), 
extend functorially without any issues if we replace $C$ by any sympathetic algebra $\Lambda$ (and even by any Banach algebra); see \cite[\S  8 \& \S 9]{BanachColmezspaces}. Consequently, we obtain period sheaves
$\mathbb{B}_{\mathrm{cris}}^+$ and $\mathbb{B}_{\mathrm{dR}}^+$
(which are viewed as functors from the category of sympathetic $C$-algebras to the category of usual $\mathbb{Q}_p$-Banach algebras in this paper) whose $C$-values are the usual period rings: $\mathbb{B}_{\mathrm{cris}}^+(C) = B_{\mathrm{cris}}^+$ and $\mathbb{B}_{\mathrm{dR}}^+(C) = B_{\mathrm{dR}}^+$. 
From these Banach algebras, 
one can obtain important examples of Banach--Colmez spaces as follows:

\begin{example}
\label{eg:BC:period.rings}
\begin{enumerate}
	\item 
	For any integer $m \ge 0$, let $\mathbb{B}_m = \mathbb{B}_{\mathrm{dR}}^+/\mathrm{Fil}^m  \mathbb{B}_{\mathrm{dR}}^+$, here $\mathrm{Fil}^m  \mathbb{B}_{\mathrm{dR}}^+= t^m \mathbb{B}_{\mathrm{dR}}^+$ and $t$ is the $p$-adic analog of $2\pi i$. Then $\mathbb{B}_m$ is a Banach--Colmez space of Dimension
		$$\operatorname{Dim} \mathbb{B}_m = (m, 0);$$
	 see \cite[Corollaire 9.23]{BanachColmezspaces}. 
	\item 
	For any pair $(d,h) \in \mathbb{N} \times \mathbb{Z}_{>0}$, let $\mathbb{U}_{d,h} = (\mathbb{B}_{\mathrm{cris}}^+)^{\varphi^h=p^d}$. Then $\mathbb{U}_{d,h}$ is a Banach--Colmez space of Dimension
		$$ \operatorname{Dim} \mathbb{U}_{d,h}  = (d, h);$$
see \cite[Proposition 1.6]{BanachColmezspaces}.
	Notice that $\mathbb{U}_{0,h} = \underline{\mathbb{Q}_{p^h}}$ (\cite[Proposition 9.15]{BanachColmezspaces}), where $\mathbb{Q}_{p^h}$ is the unique unramified extension of $\mathbb{Q}_p$ with residue field $\mathbb{F}_{p^h}$. 
	\item For any positive integers $d$ and $h$, let $\mathbb{V}_{d,-h} = \mathbb{B}_{\mathrm{dR}}^+/(\mathrm{Fil}^d(\mathbb{B}_{\mathrm{dR}}^+) + \underline{\mathbb{Q}_{p^h}}) \simeq \mathbb{B}_d/\underline{\mathbb{Q}_{p^h}}$, where $\mathbb{Q}_{p^h}$ is the unramified extension of $\mathbb{Q}_p$ of degree $h$.
    Then $\mathbb{V}_{d,-h}$ is a Banach--Colmez space of Dimension: 
	$$\operatorname{Dim} \mathbb{V}_{d,-h} = (d, -h).$$
This follows from \cite[\S 7]{BanachColmezspaces}.
\end{enumerate}
\end{example}

\begin{definition}\label{defn:dimension vector}
    For any pair of integers $(d,h)\in (\mathbb{N}\times \mathbb{Z})\backslash (\{0\}\times \mathbb{Z}_{\leq 0})$, we use $\mathbb{E}_{d,h}$ to denote the Banach--Colmez space in the following way.$$\mathbb{E}_{d,h}\coloneqq \begin{cases} \mathbb{B}_d, & \text{if $h=0$}; \\ \mathbb{U}_{d,h}, & \text{if $h>0$}; \\ \mathbb{V}_{d,h}, & \text{if $h<0$}.
    \end{cases}$$
\end{definition}

\subsection{Fargues--Fontaine curves} The category $\mathcal{BC}$ of Banach-Colmez spaces is closely related to the
category of coherent sheaves on the absolute Fargues--Fontaine curve. Recall that given $C^\flat$ a complete algebraically closed non-Archimedean field over $\bar{\mathbb{F}}_p$, a complete discrete valuation field $E$ with perfect residue field, and $\pi$ being a uniformizer of $E$, there is a curve $X_{C^\flat,E}$ defined as the projective scheme associated to the graded algebra $P_{C^\flat,E}$ with 
\[
P_{C^\flat,E}\coloneqq \oplus_{i\geq 0} ({B}_{\mathrm{cris},E}^+)^{\varphi=\pi^i}.
\]
In particular, if we let $E=\mathbb{Q}_p$, then $X_{FF}=X_{C^\flat,\mathbb{Q}_p}$ is the absolute Fargues--Fontaine curve.

There exist functions of rank and degree on the abelian category $\mathrm{Coh}(X_{FF})$, the associated notion of stability, and Harder--Narasimhan filtrations (see \cite[\S 5]{farguesfontaine-courbes} for these facts).

By the GAGA equivalence $\mathrm{Bun}_{\mathcal{X}_{C^\flat}} \simeq \mathrm{Bun}_{X_{FF}}$ (see \cite[Theorem 6.3.9]{kedlaya2015relative} and \cite[Proposition II.2.7]{fargues2021geometrization}), the fact that every bounded coherent complex on $X_{FF}$ is a perfect complex, and Scholze's equivalence $\mathrm{Perf}_{C;v} \simeq \mathrm{Perf}_{C^\flat, v}$ (implying $D_{v}(C; \mathbb{Q}_p) \simeq D_{v}(C^\flat; \mathbb{Q}_p)$, see \cite{scholze2017etale}  or \cite[Theorem 2.7]{LeBrasresult}), we obtain a functor 
	$$R \tau_* \colon D^b(\mathrm{Coh}(X_{FF})) \to D_{v}(C; \mathbb{Q}_p).$$
In particular, for any bounded coherent complex $\mathcal{E} \in  D^b(\mathrm{Coh}(X_{FF}))$ and $i \in \mathbb{Z}$, $R^i \tau_*(\mathcal{E})$ is a $v$-sheaf of $\mathbb{Q}_p$-vector spaces on $\mathrm{Perf}_C$.

For the relation between Banach--Colmez spaces and the absolute Fargues--Fontaine curve $X_{FF}$, one needs to consider a tilted $t$-structure of the derived category of coherent sheaves on $X_{FF}$. In fact, the existence of Harder--Narasimhan filtrations on $\mathrm{Coh}(X_{FF})$ provides us with a new tilted $t$-structure in $D^b(X_{FF})\coloneqq D^b(\mathrm{Coh}(X_{FF}))$ using the standard tilting method on torsion pairs (see \cite[Proposition 2.1]{happel1996tilting} for more details of the tilting method).

Hence, we have a full subcategory $\mathrm{Coh}^-(X_{FF})\subset D^b(X_{FF})$ defined in the following way:
$$\mathrm{Coh}^-(X_{FF})\coloneqq \{\mathcal{E}\in D(X_{FF}), H^i(\mathcal{E})=0\ \text{for $i\neq -1,0$, $H^{-1}(\mathcal{E})\in \mathcal{F}$, $H^0(\mathcal{E})\in\mathcal{T}$}\},$$ where $$\mathcal{T}\coloneqq \{E\in \Coh(X_{FF})\mid \mu_{min}(E)>0\}, $$ and $$\mathcal{F}\coloneqq \{E\in \Coh(X_{FF})\mid \mu_{max}(E)\leq 0  \}$$ are two full subcategories in $\Coh(X_{FF}).$ Here $\mu_{max}(E), \mu_{min}(E)$ denote the slopes of the first and the last Harder--Narasimhan factors of $E$ respectively.

\begin{theorem}[{\cite[Théorème 1.2]{LeBrasresult}}]\label{Thm: Le bras' thm}
    The functor $R^0\tau_*$ induces an exact equivalence between the abelian categories $\mathrm{Coh}^-(X_{FF})$ and $\mathcal{BC}$.
\end{theorem}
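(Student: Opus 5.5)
The plan is to prove the equivalence in three stages: first show $R\tau_*$ is t-exact from the tilted heart to sheaves, then prove full faithfulness by an Ext comparison on generators, and finally identify the essential image. Throughout, we should double-check the sign and slope conventions. The boundary case of slope-$0$ objects has to land on the correct side of the torsion pair so that the vanishings below hold exactly.

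\emph{Step 1 (t-exactness).} For $\mathcal{E}\in \mathrm{Coh}^-(X_{FF})$ we show that $R\tau_*\mathcal{E}$ is concentrated in degree $0$. Using the triangle $H^{-1}(\mathcal{E})[1]\to\mathcal{E}\to H^0(\mathcal{E})$ and Harder--Narasimhan filtrations, this reduces to stable objects. By Fargues--Fontaine's classification these are the torsion sheaves and the bundles $\mathcal{O}(\mu)$. We then need two vanishings on every perfectoid $S$, sheafified in the $v$-topology:
\begin{itemize}
\item $H^1(X_S,\mathcal{O}(\mu))=0$ for $\mu$ on the $\mathcal{T}$-side;
\item $H^0(X_S,\mathcal{O}(\mu))=0$ for $\mu$ on the $\mathcal{F}$-side.
\end{itemize}
The first follows from the fundamental exact sequence and the surjectivity of $\vartheta$ on $(\mathbb{B}^+_{\mathrm{cris}})^{\varphi^h=p^d}$ $v$-locally. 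The second follows from the computation of $\varphi$-invariants of negative slope. Along the way we identify $R^0\tau_*$ of the generators explicitly: torsion sheaves go to $\mathbb{B}_m$, positive bundles go to $\mathbb{U}_{d,h}$, and shifted negative bundles go to $\mathbb{V}_{d,-h}$. Concentration in degree $0$ immediately gives exactness of $R^0\tau_*$ on the heart, since short exact sequences in the heart are triangles.

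\emph{Step 2 (full faithfulness).} We show that $\mathrm{Ext}^i_{X_{FF}}(\mathcal{E},\mathcal{E}')\to \mathrm{Ext}^i_{v}(R\tau_*\mathcal{E},R\tau_*\mathcal{E}')$ is an isomorphism for $i=0,1$ and $\mathcal{E},\mathcal{E}'$ in the heart. By dévissage along Harder--Narasimhan filtrations and five-lemma arguments, it suffices to treat generators. The natural choices are $\mathcal{O}$-type objects mapping to $\underline{\mathbb{Q}_p}$ and the skyscraper $i_{\infty*}C$ mapping to $\mathbb{G}_a$, together with the $\mathcal{O}(\mu)$ to handle higher rank. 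Concretely we must compute $R\mathrm{Hom}_v$ among $\underline{\mathbb{Q}_p}$ and $\mathbb{G}_a$:
\begin{itemize}
\item $R\mathrm{Hom}(\underline{\mathbb{Q}_p},-)=R\Gamma_v(\mathrm{Spa}\,C,-)$, where the $v$-cohomology of $\mathbb{G}_a$ and $\underline{\mathbb{Q}_p}$ over $C$ is $C$, respectively $\mathbb{Q}_p$, in degree $0$;
\item $\mathrm{Hom}(\mathbb{G}_a,\mathbb{G}_a)=C$, and $\mathrm{Ext}^1(\mathbb{G}_a,\underline{\mathbb{Q}_p})$ is one-dimensional, realized by the sequence $0\to\mathbb{Q}_p t\to \mathbb{U}_{1,1}\to\mathbb{G}_a\to0$;
\item $\mathrm{Hom}(\mathbb{G}_a,\underline{\mathbb{Q}_p})=0$ and $\mathrm{Ext}^1(\mathbb{G}_a,\mathbb{G}_a)=0$.
\end{itemize}
These are to be matched against the coherent-side computations on $X_{FF}$.

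\emph{Step 3 (essential image).} The functor is exact and fully faithful on $\mathrm{Ext}^{\le1}$, so its essential image is an abelian subcategory closed under extensions. It contains $\underline{\mathbb{Q}_p}$ and $\mathbb{G}_a$, hence contains $\mathcal{BC}$. Conversely, every object of $\mathrm{Coh}^-(X_{FF})$ is an iterated extension of kernels and cokernels of maps between the generators, via the fundamental exact sequences $0\to\mathcal{O}\to\mathcal{O}(1)\to i_{\infty*}C\to 0$ and their higher-slope analogues. Therefore the image lies in $\mathcal{BC}$.

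\emph{Main obstacle.} The hardest step is Step 2, in particular the vanishing statements such as $\mathrm{Ext}^1_v(\mathbb{G}_a,\mathbb{G}_a)=0$ and the precise value of $\mathrm{Ext}^i_v(\mathbb{G}_a,\underline{\mathbb{Q}_p})$ (and the vanishing for $i\ge2$ needed for the dévissage). I would first try to compute these via Breen--Deligne-type resolutions of $\mathbb{G}_a$ together with Scholze's computation of the $v$-cohomology of perfectoid balls, $H^i_v(\mathbb{B}^n,\mathcal{O})=0$ for $i>0$. As a fallback, I would reduce to the proétale cohomology of affinoid perfectoids via the primitive comparison theorem.
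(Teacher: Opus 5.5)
The paper gives no proof of this statement; it quotes Le Bras. Your three-step architecture (t-exactness on the tilted heart, full faithfulness via an Ext comparison on the generators $\mathcal{O}$ and $i_{\infty*}C$, then identifying the essential image) is the natural one.

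Your caution about the boundary is justified. Step~1 forces slope-$0$ bundles to lie in $\mathcal{T}$, i.e.\ $\mathcal{T}=\{\mu_{\min}\ge 0\}$ (torsion included) and $\mathcal{F}=\{\mu_{\max}<0\}$. If instead $\mathcal{O}\in\mathcal{F}$, as in the displayed definition of $\mathrm{Coh}^-(X_{FF})$, then $\mathcal{O}[1]$ lies in the heart, and $R\tau_*(\mathcal{O}[1])=\underline{\mathbb{Q}_p}[1]$ is not concentrated in degree $0$.

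The genuine gap is in Step~2, which rests on a false claim: $\mathrm{Ext}^1_v(\mathbb{G}_a,\mathbb{G}_a)$ is \emph{not} zero.
\begin{itemize}
\item On the curve, $X_{FF}$ is regular of dimension one and $\infty$ has residue field $C$, so $\mathrm{Ext}^1_{X_{FF}}(i_{\infty*}C,i_{\infty*}C)\cong(\mathfrak{m}_\infty/\mathfrak{m}_\infty^2)^\vee\cong C$.
\item The non-split extension realizing this is the length-two torsion sheaf $\mathcal{T}_2$ at $\infty$. Its image under $R\tau_*$ is $0\to\mathbb{B}_1\xrightarrow{t}\mathbb{B}_2\to\mathbb{B}_1\to 0$, with $\mathbb{B}_1=\mathbb{G}_a$.
\item Your Step~2 requires $\mathrm{Ext}^1_{X_{FF}}(i_{\infty*}C,i_{\infty*}C)\to\mathrm{Ext}^1_v(\mathbb{G}_a,\mathbb{G}_a)$ to be an isomorphism. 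So the target must be a one-dimensional $C$-vector space.
\item With your value $0$, $\mathbb{B}_2$ would split as $\mathbb{G}_a^{\oplus 2}$, while $\mathcal{T}_2\not\cong(i_{\infty*}C)^{\oplus 2}$. Full faithfulness would then fail.
\end{itemize}
So the ``main obstacle'' you single out is a statement that cannot be proved.

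Your proposed computational route would in fact produce this wrong answer. The $v$-sheaf $\mathbb{G}_a^n$ on $\mathrm{Perf}_C$ is the diamond of the smooth rigid space $\mathbb{A}^n_C$, not a perfectoid ball. Vanishing of $H^{>0}_v(\text{affinoid perfectoid},\mathcal{O})$ therefore does not by itself compute the $E_1$-terms of a Breen--Deligne spectral sequence.

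The needed input is Scholze's $R^j\nu_*\widehat{\mathcal{O}}\cong\Omega^j(-j)$ for smooth rigid spaces. Since $\mathbb{A}^n_C$ is Stein, this gives $H^1_v(\mathbb{A}^n_C,\widehat{\mathcal{O}})\cong H^0(\mathbb{A}^n_C,\Omega^1)(-1)\neq 0$. In the Breen--Deligne spectral sequence, the translation-invariant differential $dx$ is what survives, and it yields $\mathrm{Ext}^1_v(\mathbb{G}_a,\mathbb{G}_a)\cong C$.

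For your five-lemma dévissage you also need $\mathrm{Ext}^{\ge 2}_v(\mathbb{G}_a,\mathbb{G}_a)=0$, together with the corresponding statements for pairs involving $\underline{\mathbb{Q}_p}$; you have not proved these. Once the target values are corrected to match $\mathrm{Ext}^\bullet_{X_{FF}}(i_{\infty*}C,i_{\infty*}C)=(C,C,0,\dots)$, the rest of your outline is structurally sound.
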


The following equivalence between derived categories was implicit in Le Bras' paper \cite{LeBrasresult}, and follows from the above equivalence and cotilting theory.

\begin{prop}
    The two bounded derived categories $D^b(\mathcal{BC})$ and $D^b(X_{FF})$ are equivalent to each other.
\end{prop}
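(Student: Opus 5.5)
The strategy is to invoke the standard result of Happel--Reiten--Smal\o{} (\cite{happel1996tilting}): if $(\mathcal{T},\mathcal{F})$ is a torsion pair in an abelian category $\mathcal{A}$ and $\mathcal{H}$ is the tilted heart, then the realization functor $D^b(\mathcal{H})\to D^b(\mathcal{A})$ is an equivalence as soon as the torsion pair is \emph{tilting} (every object of $\mathcal{A}$ is a quotient of an object of $\mathcal{T}$) or \emph{cotilting} (every object of $\mathcal{A}$ is a subobject of an object of $\mathcal{F}$). Applied to $\mathcal{A}=\Coh(X_{FF})$ and $\mathcal{H}=\mathrm{Coh}^-(X_{FF})$, this gives $D^b(\mathrm{Coh}^-(X_{FF}))\simeq D^b(X_{FF})$. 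Composing with the exact equivalence $\mathrm{Coh}^-(X_{FF})\simeq\mathcal{BC}$ of Theorem~\ref{Thm: Le bras' thm}, which induces $D^b(\mathrm{Coh}^-(X_{FF}))\simeq D^b(\mathcal{BC})$, yields the proposition.

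The main input is therefore checking the cotilting condition: every $E\in\Coh(X_{FF})$ embeds into a sheaf with $\mu_{max}\le 0$. I do not use the tilting condition here, since torsion sheaves lie in $\mathcal{T}$ but are not quotients of anything with $\mu_{min}>0$ in a way that helps, whereas subobjects are easy to control. I would proceed in two steps.
\begin{itemize}
\item First take $E=\mathcal{V}$ a vector bundle. By the Fargues--Fontaine classification, $\mathcal{V}\simeq\bigoplus_i\mathcal{O}(\lambda_i)$. Choose an effective divisor of large degree, e.g.\ a point $\infty$ of degree $1$, and use the injection $\mathcal{V}(-n\infty)\hookrightarrow\mathcal{V}$. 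This runs the wrong way, so instead twist down: $\mathcal{V}\cong\mathcal{V}(-n\infty)\otimes\mathcal{O}(n\infty)$. More directly, each $\mathcal{O}(\lambda)$ embeds into some $\mathcal{O}(\mu)$ with $\mu\le0$? That fails, since $\operatorname{Hom}(\mathcal{O}(\lambda),\mathcal{O}(\mu))=0$ for $\lambda>\mu$.
\item So the cotilting condition fails for bundles of positive slope, and I switch to the tilting condition. For a vector bundle $\mathcal{V}$, it is a quotient of a direct sum of $\mathcal{O}(n)$ with $n\gg0$: since $X_{FF}$ is the Proj of a graded algebra, $\mathcal{O}(1)$ is ample, so $\mathcal{V}(n)$ is globally generated for $n\gg 0$, and $\mathcal{O}(-n)^{\oplus N}\twoheadrightarrow\mathcal{V}$. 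This again has the wrong sign, so ampleness alone does not suffice. The right tool is instead the fact that $\mathcal{O}\twoheadrightarrow$ a torsion sheaf, and the extensions $0\to\mathcal{O}\to\mathcal{O}(1/h)\to i_{\infty*}(\text{skyscraper})\to0$ and their relatives exhibit $\mathcal{O}(\lambda)$ for $\lambda\le0$ as a quotient of positive-slope bundles. For instance, $0\to\mathcal{O}(-1)\to\mathcal{O}^{\oplus2}\to\mathcal{O}(1)\to 0$ is of the wrong type, while a surjection $\mathcal{O}(1/2)\twoheadrightarrow\mathcal{O}(\cdot)$ is obtained via modifications.
\end{itemize}

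The expected obstacle is exactly this: exhibiting every coherent sheaf of slope $\le 0$ as a quotient of a sheaf with all slopes $>0$. For this I would use Fargues--Fontaine's result that for any $\lambda\le0$ and large $d$ there is a surjection $\mathcal{O}(1/d)^{\oplus m}\twoheadrightarrow\mathcal{O}(\lambda)$. Concretely, take a degree-$1$ modification $0\to\mathcal{O}(\lambda)^{\vee}\otimes\cdots$, i.e.\ dualize an injection $\mathcal{O}(-\lambda)\hookrightarrow\mathcal{O}(-1/d)^{\oplus m}$. Such an injection exists with torsion cokernel whenever slopes match via minuscule modifications, because every vector bundle of rank $r$ and degree $\ge$ that of $\mathcal{O}(-\lambda)$ arises as a modification of it. Torsion sheaves and positive-slope bundles already lie in $\mathcal{T}$, and the Harder--Narasimhan filtration splits on $X_{FF}$, so summing the pieces gives the tilting condition for every $E$. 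The HRS theorem then concludes the argument.
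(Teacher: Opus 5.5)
Your framework (Happel--Reiten--Smal\o{} tilting theory composed with Le Bras' equivalence) is the same as the paper's. The verification step fails, however, because you have the tilting and cotilting conditions reversed.

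A torsion class is always closed under quotients, and a torsion-free class under subobjects. So your ``tilting'' condition (every object is a quotient of an object of $\mathcal{T}$) is equivalent to $\mathcal{T}=\Coh(X_{FF})$. Likewise your ``cotilting'' condition (every object embeds in an object of $\mathcal{F}$) is equivalent to $\mathcal{F}=\Coh(X_{FF})$. Neither holds here, which is exactly why both of your bullets run into obstructions.

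In particular, the surjection $\mathcal{O}(1/d)^{\oplus m}\twoheadrightarrow\mathcal{O}(\lambda)$ for $\lambda\le 0$ that you attribute to Fargues--Fontaine cannot exist. We have $\operatorname{Hom}(\mathcal{O}(1/d),\mathcal{O}(\lambda))=0$, because both bundles are semistable and $1/d>\lambda$; this is the very vanishing you recorded in your first bullet. The proposed dualization does not rescue this. First, $\operatorname{Hom}(\mathcal{O}(-\lambda),\mathcal{O}(-1/d))=0$ for the same reason. Second, the dual of an injection of vector bundles with torsion cokernel is again an injection, not a surjection. As written, the argument establishes neither condition, so the final appeal to HRS has no input.

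The correct definitions are as follows. The pair is \emph{cotilting} if $\mathcal{F}$ generates, i.e.\ every object of $\Coh(X_{FF})$ is a \emph{quotient} of an object of $\mathcal{F}$. It is \emph{tilting} if $\mathcal{T}$ cogenerates, i.e.\ every object \emph{embeds} into an object of $\mathcal{T}$.

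With the cotilting condition, the surjection $\mathcal{O}(-n)^{\oplus N}\twoheadrightarrow\mathcal{V}$ you discarded as having ``the wrong sign'' is precisely what is needed, since $\mathcal{O}(-n)^{\oplus N}\in\mathcal{F}$. It comes from global generation of $\mathcal{V}(n)$ for $n\gg 0$; on $X_{FF}$ this is a theorem of Kedlaya--Liu and Fargues--Scholze, as the paper cites, rather than a formal consequence of the Proj construction. The paper's proof then adds two observations:
\begin{enumerate}
\item A torsion sheaf $T$ is a quotient of some $\mathcal{O}(-n)^{\oplus m}$: twist a surjection $\mathcal{O}^{\oplus m}\twoheadrightarrow T(n)$ by $\mathcal{O}(-n)$.
\item Every coherent sheaf on $X_{FF}$ is the direct sum of its torsion part and a vector bundle.
\end{enumerate}
This is exactly the paper's proof.

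The correctly stated tilting condition would also work. For a vector bundle $\mathcal{V}$, multiplication by a nonzero section of $\mathcal{O}(n)$ (e.g.\ $t^n$) gives $\mathcal{V}\hookrightarrow\mathcal{V}(n)\in\mathcal{T}$ for $n\gg 0$. Torsion sheaves already lie in $\mathcal{T}$.
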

\begin{proof}
 The abelian category $\mathcal{BC}$ is the tilting category of $\mathrm{Coh}(X_{FF})$ with respect to the torsion pair $(\mathcal{T}, \mathcal{F})$ in the sense of \cite{happel1996tilting}.
 
 According to \cite{happel1996tilting} (see also \cite[Prop. 5.4.3]{alexey2003generators}), to prove the equivalence, it suffices to show that the above torsion pair $(\mathcal{T}, \mathcal{F}) $ is \emph{cotilting} in the sense that every object in $\mathrm{Coh}(X_{FF})$ is a quotient of an object in $\mathcal{F}$. In fact, every torsion sheaf on $X_{FF}$ is a quotient of $\mathcal{O}_{X_{FF}}^{\oplus m}$ for some $m > 0$, and thus a quotient of $\mathcal{O}_{X_{FF}}(-n)^{\oplus m} \in \mathcal{F}$ for some sufficiently large $n$. By \cite[Prop. 6.2.4]{kedlaya2015relative} (see also \cite[Thm. II.2.6]{fargues2021geometrization}), every vector bundle on $X_{FF}$ can be obtained as a quotient of $\mathcal{O}_{X_{FF}}(-n)^{\oplus m} \in \mathcal{F}$ for some $m>0$ and sufficiently large $n$. Therefore, the torsion pair $(\mathcal{T}, \mathcal{F})$ is cotilting, and the equivalence follows.
\end{proof}
\begin{remark}
    Note that in general, the derived category of a tilted heart is not necessarily equivalent to the original triangulated category it lies in (see \cite[Exercise 5.3]{macri2017lectures}).
\end{remark}

\begin{remark}\label{rmk:corresponding vector bundles}
    By this equivalence, the examples in \ref{eg:BC:period.rings} correspond to the following objects in $D(X_{FF})$:
    \begin{enumerate}
        \item For any integer $m \ge 0$, the object corresponding to $\mathbb{B}_{m}$ is the indecomposable torsion sheaf $\mathcal{T}_m$ of length $m$ supported at $\infty$ in $X_{FF}$ corresponding to the untilt $C$ of $C^\flat$.
        \item For any pair $(d,h) \in \mathbb{N} \times \mathbb{Z}_{>0}$, the object corresponding to $\mathbb{U}_{d,h}$ is the stable vector bundle $\mathcal{O}(\frac{d}{h})$ of slope $\frac{d}{h}$ (see \cite[\S 5.3]{farguesfontaine-courbes} for the construction of $\mathcal{O}(\frac{d}{h})$). 
        \item For any positive integers $d$ and $h$, the object corresponding to $\mathbb{V}_{d,-h}$ is the object $\mathcal{O}(\frac{-d}{h})[1],$ where $\mathcal{O}(\frac{-d}{h})$ is the stable vector bundle on $X_{FF}$ with degree $-d$ and rank $h$. 
    \end{enumerate}
\end{remark}



\section{Modular variants of fundamental lemma}

Let us recall Colmez's version of fundamental lemma.

\begin{theorem}[{\cite[Proposition 7.11]{BanachColmezspaces}}]\label{theorem:the fundamental lemma}
   Let \(\mathbb{W}\) be a Banach--Colmez space admitting a presentation of the form
\[
0 \longrightarrow U \longrightarrow \mathbb{W} \xrightarrow{\alpha } \mathbb{V}^1
\longrightarrow 0,
\]
where \(U\) is a finite-dimensional \(\mathbb{Q}_p\)-Banach--Colmez Space. Let
\[
\psi: \mathbb{W} \longrightarrow \mathbb{V}^1
\]
be a morphism of Banach--Colmez spaces. Then one of the following two alternatives holds.
\begin{enumerate}
    \item Either \(\psi\) is surjective, and \(\ker \psi\) is a finite-dimensional
\(\mathbb{Q}_p\)-Banach--Colmez Space whose \(\mathbb{Q}_p\)-dimension is $\dim_{\mathbb{Q}_p} U$;

\item or \(\operatorname{Im}\psi\) is a finite-dimensional \(\mathbb{Q}_p\)-Banach--Colmez Space, and
\[
0 \longrightarrow U\cap \ker\psi
\longrightarrow \ker\psi
\xrightarrow{\alpha} \mathbb{V}^1
\longrightarrow 0
\]
is a presentation of \(\ker\psi\), with
\[
\dim_{\mathbb{Q}_p}(U\cap \ker\psi)
=
\dim_{\mathbb{Q}_p}U-\dim_{\mathbb{Q}_p}(\operatorname{Im}\psi).
\]
\end{enumerate}
\end{theorem}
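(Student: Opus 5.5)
We plan to prove Theorem~\ref{theorem:the fundamental lemma} by moving the whole situation to $D^b(X_{FF})$ via Theorem~\ref{Thm: Le bras' thm} and then using the slope theory of Fargues--Fontaine. Under Remark~\ref{rmk:corresponding vector bundles}, $\mathbb{V}^1=\mathbb{B}_1$ corresponds to the skyscraper sheaf $\mathcal{T}_1=i_{\infty*}C$, a simple object of $\mathrm{Coh}^-(X_{FF})$. A finite-dimensional $\mathbb{Q}_p$-space $U$ corresponds to $\mathcal{O}^{\oplus n}[1]$ with $n=\dim_{\mathbb{Q}_p}U$, since $\mathbb{U}_{0,1}=\underline{\mathbb{Q}_p}$ corresponds to $\mathcal{O}$ placed in degree $-1$.

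\textbf{Step 1: identify $\mathbb{W}$.} The extension $0\to U\to\mathbb{W}\to\mathbb{V}^1\to0$ in the heart gives $H^{-1}(\mathbb{W})\hookrightarrow \mathcal{O}^n$ and an exact sequence
\[
0\to H^{-1}(\mathbb{W})\to\mathcal{O}^n\to\mathcal{T}_1\to H^0(\mathbb{W})\to0 .
\]
We split into two cases according to the connecting map $\delta:\mathcal{O}^n\to\mathcal{T}_1$.
\begin{enumerate}
\item If $\delta=0$, then $\mathbb{W}$ is an extension with $H^{-1}=\mathcal{O}^n$ and $H^0=\mathcal{T}_1$. Since $\mathrm{Hom}(\mathcal{T}_1,\mathcal{O}^n[2])=\mathrm{Ext}^2=0$ on a curve, $\mathbb{W}\cong \mathcal{O}^n[1]\oplus\mathcal{T}_1$, i.e.\ the extension splits.
\item If $\delta\ne0$, it is surjective because $\mathcal{T}_1$ is simple. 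Then $\mathbb{W}=\mathcal{F}[1]$ with $\mathcal{F}=\ker\delta$ a rank $n$, degree $-1$ bundle. Since $\mathcal{F}\subset\mathcal{O}^n$, it has all HN slopes $\le0$, so $\mathcal{F}\cong\mathcal{O}^{n-1-k}\oplus\mathcal{O}(-1/(k+1))$ for some $k$. Pinning down the exact shape needs a little care, but we only need slopes $\le 0$.
\end{enumerate}

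\textbf{Step 2: analyse $\psi$.} A morphism $\psi:\mathbb{W}\to\mathcal{T}_1$ in the heart is given by its components. From $\mathcal{F}[1]$ (or $\mathcal{O}^n[1]$) it is an element of $\mathrm{Ext}^{-1}$, which vanishes. So in case (b) we get $\psi=0$ and alternative (2) holds trivially with $\operatorname{Im}\psi=0$. In case (a), $\psi$ factors through the projection to $\mathcal{T}_1$ and is a scalar $c\in\mathrm{End}(\mathcal{T}_1)=C$. (Strictly speaking $\mathrm{Hom}$ in $\mathcal{BC}$ between $\mathbb{B}_1$'s is $C$.)
\begin{itemize}
\item If $c\ne0$, then $\psi$ is surjective with kernel $\mathcal{O}^n[1]=U$. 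This is alternative (1).
\item If $c=0$, then $\operatorname{Im}\psi=0$ and $\ker\psi=\mathbb{W}$. This is alternative (2).
\end{itemize}

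This is suspicious, because Colmez's lemma allows $\operatorname{Im}\psi$ to be a nonzero finite-dimensional space. Indeed, a morphism $\mathcal{F}[1]\to\mathcal{T}_1$ need not vanish in the heart. The heart morphism is $\mathrm{Hom}_{D}(\mathcal{F}[1],\mathcal{T}_1)=\mathrm{Ext}^{-1}=0$, so the image as a Banach--Colmez space can only be nonzero if it is the subobject $\operatorname{Im}$ computed in the heart. Its image is a subobject of $\mathcal{T}_1$ in $\mathrm{Coh}^-$, which need not be simple there: $\mathcal{O}^m[1]\hookrightarrow\mathcal{T}_1$ is injective in the heart, via $\mathcal{O}^m\to\mathcal{O}^m\to\mathcal{T}_1$ with cokernel $\mathcal{F}'[1]$. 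So $\mathcal{T}_1$ is not simple in the heart. The main obstacle is therefore the correct case analysis of a general heart morphism $\psi:\mathbb{W}\to\mathcal{T}_1$.

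\textbf{Step 3: corrected analysis.} We compute $\operatorname{Im}\psi$ as a subobject of $\mathcal{T}_1$ in $\mathrm{Coh}^-$. Such subobjects $I$ fit in a sequence $0\to I\to\mathcal{T}_1\to Q\to0$, and the long exact sequence gives
\[
0\to H^{-1}I\to0,\qquad 0\to H^{-1}Q\to H^0I\to\mathcal{T}_1\to H^0Q\to0 .
\]
Here I am confusing degrees. Properly, $H^{-1}I\to H^{-1}\mathcal{T}_1=0$ fails to constrain, so:
\begin{itemize}
\item either $I$ has nonzero $H^0$, which maps onto $\mathcal{T}_1$, making $I=\mathcal{T}_1$ (up to an $H^{-1}$ part); this gives surjectivity and alternative (1), with $\ker\psi=\mathbb{W}\times_{\mathcal{T}_1}0$ a bundle of degree $0$ and all slopes $\le0$ and $>$ slope bounds, hence $\mathcal{O}^n[1]$ by additivity of rank and degree;
\item or $I=H^{-1}(I)[1]$ with $H^{-1}I\subset H^{-1}Q$ of slopes $>0$...
\end{itemize}
In the second case we use slope inequalities to show $H^{-1}I\cong\mathcal{O}^m$. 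The kernel then inherits a presentation via $\alpha$, and the dimension formula follows from additivity of rank. These slope inequalities in the second case are the step I expect to require the most care.
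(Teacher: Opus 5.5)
There is a genuine gap, and it starts with the dictionary: $U$ does not correspond to $\mathcal{O}^{\oplus n}[1]$. Under Theorem~\ref{Thm: Le bras' thm}, $\underline{\mathbb{Q}_p}=\mathbb{U}_{0,1}$ corresponds to $\mathcal{O}$ in degree $0$ (Remark~\ref{rmk:corresponding vector bundles}(2) with $(d,h)=(0,1)$). By contrast, $R^0\tau_*(\mathcal{O}[1])=R^1\tau_*\mathcal{O}=0$. Your identification comes from reading the displayed torsion pair in \S2 literally, which puts $\mathcal{O}$ in $\mathcal{F}$. For the equivalence to hold, slope-$0$ bundles must lie in the torsion part: $\mathcal{T}=\{\mu_{\min}\ge 0\}$ and $\mathcal{F}=\{\mu_{\max}<0\}$, as in Le Bras.

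Your model therefore gives $\mathrm{Hom}(U,\mathbb{V}^1)=\mathrm{Hom}(\mathcal{O}^n[1],\mathcal{T}_1)=0$, which already fails for the inclusion $\mathbb{Q}_p\subset C$. Computed correctly, it also says every $\mathbb{W}$ splits, which fails for $0\to\mathbb{Q}_p t\to\mathbb{U}_{1,1}\to\mathbb{V}^1\to 0$ (i.e.\ $0\to\mathcal{O}\to\mathcal{O}(1)\to\mathcal{T}_1\to 0$). These are exactly the situations the lemma is about.

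Independently, the long exact sequence in Step~1 has sub and quotient interchanged. For $0\to U\to\mathbb{W}\to\mathbb{V}^1\to 0$ in the heart there is no connecting map $H^{-1}(U)\to H^0(\mathbb{V}^1)$; the sequence you wrote belongs to $0\to\mathbb{V}^1\to\mathbb{W}\to U\to 0$, so your case (b) is spurious. Finally, Step~3, which you rightly single out as the crux, is not carried out, and its second alternative cannot occur. $H^{-1}$ is left exact on the heart and $H^{-1}(\mathcal{T}_1)=0$, so every subobject $I\subseteq\mathcal{T}_1$ has $H^{-1}(I)=0$.

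Once $U\leftrightarrow\mathcal{O}^n$, the objects $U$, $\mathbb{V}^1$ and $\mathbb{W}$ are all sheaves with slopes $\ge 0$, and your Step~3 strategy does yield a proof, different from the paper's.
\begin{itemize}
\item \emph{Subobjects of $\mathcal{T}_1$.} For $0\to I\to\mathcal{T}_1\to Q\to 0$ in the heart, $I$ is a sheaf with slopes $\ge 0$, and there is an exact sequence $0\to H^{-1}(Q)\to I\to\mathcal{T}_1\to H^0(Q)\to 0$ with $H^{-1}(Q)$ of slopes $<0$. A degree count leaves only $I=0$, $I=\mathcal{T}_1$, or $I\cong\mathcal{O}^r$ with $Q\cong\mathcal{O}(-1/r)[1]$. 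That is the dichotomy for $\operatorname{Im}\psi$.
\item \emph{Subobjects of $\mathbb{W}$ and $U$.} Subobjects of $\mathbb{W}$ are sheaves with slopes $\ge 0$, and subobjects of $\mathcal{O}^n$ are of the form $\mathcal{O}^s$. Note $(\rank,\deg)(\mathbb{W})=(n,1)$.
\item \emph{Surjective case.} Additivity of $(\rank,\deg)$ makes $\ker\psi$ a sheaf with slopes $\ge 0$, rank $n$ and degree $0$, so $\ker\psi\cong\mathcal{O}^n$.
\item \emph{Other case.} Here $\ker\psi$ has degree $1$, while $U\cap\ker\psi\cong\mathcal{O}^s$ has degree $0$. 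So $\alpha|_{\ker\psi}$ has image of degree $1$, hence equal to $\mathcal{T}_1$, and comparing ranks gives $s=n-r$.
\end{itemize}

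The paper instead classifies $\mathbb{W}\cong\mathbb{E}_{1,m}\oplus\mathbb{Q}_p^{n-m}$ by iterating Lemma~\ref{lemma:minimal triangle lemma}(3). It then uses Theorem~\ref{thm:SL2 variants}(2) (a nonzero $\mathbb{E}_{1,m}\to\mathbb{E}_{1,0}$ is onto with kernel $\mathbb{Q}_p^m$) together with Proposition~\ref{prop: real statement}(1). The corrected version of your argument avoids classifying $\mathbb{W}$ at all.
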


Here $\mathbb{V}^1$ is $\mathbb{E}_{1,0}$ in our notation. This result follows directly from the following statements. \begin{prop}\label{prop: real statement}
Under the same assumptions of Theorem \ref{theorem:the fundamental lemma}, the following statements are true.\begin{enumerate}
\item for any nontrivial morphism $\phi:\mathbb{E}_{0,1}\rightarrow \mathbb{E}_{1,0},$ we have that $\phi$ is injective and $\coker(\phi)\simeq  \mathbb{E}_{1,-1}.$ 
   
    \item for any $n\in\mathbb{N}$ and any nontrivial morphism $\psi: \mathbb{E}_{1,n}\rightarrow \mathbb{E}_{1,0},$ we have that $\psi$ is surjective and $\ker\psi$ is a finite-dimensional $\mathbb{Q}_p$-Banach--Colmez space with dimension equal to $n$.
     \item we have that $\mathbb{W}\simeq \mathbb{E}_{1,n}\oplus V$, where $n\in \mathbb{N}$, and $V$ is a finite-dimensional $\mathbb{Q}_p$-Banach--Colmez space with dimension equal to $\dim_{\mathbb{Q}_p}(U)-n.$
    
\end{enumerate}
    
\end{prop}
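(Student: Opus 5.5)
We transport everything through Le Bras' equivalence (Theorem~\ref{Thm: Le bras' thm}) and the derived equivalence, and work in $\mathrm{Coh}^-(X_{FF})$. By Remark~\ref{rmk:corresponding vector bundles}, $\mathbb{E}_{0,1}=\underline{\mathbb{Q}_p}$ corresponds to $\mathcal{O}$, $\mathbb{E}_{1,0}=\mathbb{V}^1$ corresponds to the skyscraper $\mathcal{T}_1=k(\infty)$ at the point $\infty$, $\mathbb{E}_{1,n}$ with $n\ge 1$ corresponds to $\mathcal{O}(1/n)$, and $\mathbb{E}_{1,-1}$ corresponds to $\mathcal{O}(-1)[1]$. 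A finite-dimensional $\mathbb{Q}_p$-space of dimension $m$ corresponds to $\mathcal{O}^{\oplus m}$. We need the extension groups $\operatorname{Ext}^1(\mathcal{O}(a),\mathcal{O}(b))=H^1(\mathcal{O}(b-a))=0$ for $b-a\ge 0$, and $\operatorname{Hom}(\mathcal{O}(\lambda),\mathcal{O}(\mu))=0$ for $\lambda>\mu$ (Fargues--Fontaine).

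For (1), a nonzero morphism $\mathcal{O}\to k(\infty)$ in $\mathrm{Coh}(X_{FF})$ is surjective, because $k(\infty)$ is simple. Its kernel is the ideal sheaf of $\infty$, which is $\mathcal{O}(-1)$, since $\infty$ is the zero locus of a degree-one section $t$. This gives a triangle $\mathcal{O}(-1)\to\mathcal{O}\to k(\infty)\to\mathcal{O}(-1)[1]$. In the tilted heart it yields a short exact sequence $0\to\mathcal{O}\to k(\infty)\to\mathcal{O}(-1)[1]\to 0$, because $\mathcal{O}(-1)\in\mathcal{F}$ and all three terms lie in the heart. So $\phi$ is injective in $\mathcal{BC}$ and $\coker\phi\simeq\mathbb{E}_{1,-1}$.

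For (2), the case $n=0$ is trivial, since $\mathbb{E}_{1,0}$ corresponds to a simple object. For $n\ge1$, the map $\psi:\mathcal{O}(1/n)\to k(\infty)$ is nonzero in $\mathrm{Coh}$, hence surjective since $k(\infty)$ is simple. Its kernel $K$ is a vector bundle of rank $n$ and degree $0$. Every HN slope of $K$ is at least $\mu_{\min}$ of the bundle minus the skyscraper contribution. More precisely, $K\subset\mathcal{O}(1/n)$ and $\mathcal{O}(1/n)$ is stable of slope $1/n$, so every subbundle has slope $<1/n$, i.e.\ degree $\le 0$ for ranks $\le n$. Hence $\mu_{\max}(K)\le 0$, and since $\deg K=0$, $K$ is semistable of slope $0$, so $K\simeq\mathcal{O}^{\oplus n}$. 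Thus $K\in\mathcal{F}\cap\mathcal{T}$ fails but $K\in\mathcal{T}$ fails too. We must check that the heart sequence is right. We have the triangle $K\to\mathcal{O}(1/n)\to k(\infty)$, and here $\mathcal{O}^{\oplus n}$ is in $\mathcal{F}$, with slope $\le 0$. So in the heart we get $0\to ?$. This needs care: Le Bras' heart puts $\mathcal{O}$ in degree $0$ (as $\underline{\mathbb{Q}_p}$), so the torsion pair must put slope-$0$ bundles in $\mathcal{T}$ with the convention $\mu_{\min}\ge0$. We adopt that convention, consistent with Remark~\ref{rmk:corresponding vector bundles}. Then $0\to\mathcal{O}^{\oplus n}\to\mathcal{O}(1/n)\to k(\infty)\to0$ is exact in the heart, giving surjectivity with kernel of $\mathbb{Q}_p$-dimension $n$. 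The same convention is what makes (1) work.

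For (3), the heart sequence $0\to\mathcal{O}^{m}\to W\to k(\infty)\to 0$ with $m=\dim U$ shows that $W$ is a coherent sheaf, since extensions of sheaves in the heart with $H^{-1}=0$ remain sheaves. It has rank $m$ and degree $1$. Its torsion part is either $k(\infty)$ or $0$. In the torsion case, the sequence splits in $\mathrm{Coh}$ and $W\simeq\mathcal{O}^m\oplus\mathbb{E}_{1,0}$, which is the case $n=0$. Otherwise $W$ is a vector bundle of degree $1$ containing $\mathcal{O}^m$ with torsion cokernel, so all HN slopes are $\ge0$ with total degree $1$. By classification, $W\simeq\mathcal{O}(1/n)\oplus\mathcal{O}^{m-n}$ for some $1\le n\le m$.

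The main obstacle is the bookkeeping of which slope-$0$ objects sit in degree $0$ of the heart, i.e.\ matching the paper's torsion-pair convention with Le Bras'. Once that is fixed, each step reduces to the classification of bundles and the simplicity of $k(\infty)$.
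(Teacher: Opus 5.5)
Your proof is correct, but it is organized differently from the paper's. The paper does not argue directly with these objects. It reads off (1) from Lemma~\ref{lemma:minimal triangle lemma} for the adjacent Farey triangle $(\frac{1}{0},\frac{0}{1},\frac{-1}{1})$, and (2) from Theorem~\ref{thm:SL2 variants}(2) for the fundamental triangle $(\frac{1}{0},\frac{1}{1},\frac{0}{1})$ with $k=n-1$. It gets (3) by induction on $\dim_{\mathbb{Q}_p}U$, using Lemma~\ref{lemma:minimal triangle lemma}(3) to identify the nonsplit extensions. Behind all of these is one uniform argument: stability plus Pick's theorem, applied to the lattice triangle with vertices $(0,0)$, $(d_i,h_i)$, $(d_j,h_j)$, which has no interior lattice points. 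This pins down the slope of each image and each last Harder--Narasimhan quotient.

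For (1) and (2) you carry out the special case of this by hand, replacing the lattice-point count by three facts:
the simplicity of $k(\infty)$ in $\mathrm{Coh}(X_{FF})$;
the identification of the ideal sheaf of $\infty$ with $\mathcal{O}(-1)$ via $t$;
the stability of $\mathcal{O}(1/n)$, which forces $\mu_{\max}(K)\le 0$ and hence $K\simeq\mathcal{O}^{\oplus n}$.
You also treat $n=0$ separately, which the paper's reduction leaves implicit, since it needs $k=n-1\ge 0$.

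For (3) your route is genuinely different. You do not add one copy of $\underline{\mathbb{Q}_p}$ at a time and identify extensions. Instead you note that $W$ is an honest coherent sheaf of rank $m$ and degree $1$, and split off its torsion when that torsion is $k(\infty)$. Otherwise you combine $\mu_{\min}(W)\ge 0$ with the Fargues--Fontaine classification to get $W\simeq\mathcal{O}(1/n)\oplus\mathcal{O}^{\oplus(m-n)}$. This is shorter and self-contained, with no bookkeeping of extension classes. What it does not give is the paper's point: that Colmez's lemma is the instance $(\frac{1}{0},\frac{1}{1},\frac{0}{1})$ of a statement proved uniformly for every Farey triangle.

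Your worry about the convention is justified and not cosmetic. With the torsion pair as displayed in the paper ($\mu_{\min}>0$ versus $\mu_{\max}\le 0$), $\mathcal{O}$ lies in $\mathcal{F}$. The heart then contains $\mathcal{O}[1]$ but not $\mathcal{O}$, so $\mathbb{E}_{0,1}=\underline{\mathbb{Q}_p}$ could not correspond to $\mathcal{O}$ as Remark~\ref{rmk:corresponding vector bundles} asserts. Le Bras' pair is $\mu_{\min}\ge 0$ versus $\mu_{\max}<0$, and both the paper and your argument need it.

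In a write-up:
\begin{itemize}
\item State this convention once at the outset, since (1) already uses $\mathcal{O}$ in the heart.
\item Say explicitly that $k(\infty)$ is simple in $\mathrm{Coh}(X_{FF})$ but not in the tilted heart, where $\mathcal{O}\hookrightarrow k(\infty)$ is exactly (1). Simplicity in $\mathrm{Coh}(X_{FF})$ is all you use.
\item Add the one-line reason for $\mu_{\min}(W)\ge 0$ in (3): the last Harder--Narasimhan quotient receives a full-rank image of $\mathcal{O}^{\oplus m}$.
\item Delete the false starts in (2): the sentence about ``the skyscraper contribution'', the line about $K\in\mathcal{F}\cap\mathcal{T}$, and ``$0\to ?$''.
\end{itemize}
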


\begin{remark}
    Although the fundamental lemma does not need the $\coker$ part in Proposition \ref{prop: real statement}.(1), we include it for the symmetry of statements.
\end{remark}

In fact, for any Farey triangle, we can prove a similar result.  Let us recall what a Farey triangle is.

\begin{definition}
\label{def:Farey-triangle}
     A triple 
     $$(\frac{h_1}{d_1},\frac{h_2}{d_2},\frac{h_3}{d_3})$$
     of elements of $\mathbb{Q}_{\infty}$ is called a \textit{Farey triangle} if 
     each rational number is written in simplest form with a non-negative denominator, while
$\infty$ is written as $1/0$, and if
     the following conditions are met: \begin{enumerate}
        \item we have $\frac{h_1}{d_1}>\frac{h_2}{d_2}>\frac{h_3}{d_3}$, and $h_2=h_1+h_3, \ d_2=d_1+d_3;$
        \item $h_1d_2-h_2d_1=h_2d_3-h_3d_2=1.$
    \end{enumerate}
\end{definition}
\begin{remark}
    The extended modular group $\widetilde{\Gamma}=\mathrm{PGL(2,\mathbb{Z})}$ acts on these triangles transitively (see \cite[\S 3]{geometryofcontinuedfractions}). The triangle $(\frac{1}{0},\frac{1}{1},\frac{0}{1})$ is usually called the fundamental Farey triangle. See Figure 1 for a pictorial illustration.
\end{remark}
\begin{remark}
\label{rmk:Farey-triangle-convention}

Note that we write the entries of a Farey triangle as $\frac{h_i}{d_i}$, rather than
as $\frac{d_i}{h_i}$, which is the usual slope of the corresponding vector bundle
$\mathcal{O}(\frac{d_i}{h_i})$ in Remark~\ref{rmk:corresponding vector bundles}. 

We use the convention $\frac{h_i}{d_i}$ for two reasons. First, it is natural
from the point of view of the tilt in Le Bras' equivalence: with this
convention, the ordering on $\mathbb{Q}_\infty$ corresponds to the opposite of
the slope ordering in the tilted heart $\mathrm{Coh}^-(X_{FF})$. Second, when
$\frac{h_i}{d_i}$ is written in lowest terms with $d_i>0$, the denominator
$d_i$ matches the dimension coordinate in the Dimension vector of a
Banach--Colmez space as in  Definition~\ref{defn:dimension vector}.
\end{remark}


For any Farey triangle $(\frac{h_1}{d_1},\frac{h_2}{d_2},\frac{h_3}{d_3})$, we have the following result.

\begin{lemma}\label{lemma:minimal triangle lemma}
    Let $(\frac{h_1}{d_1},\frac{h_2}{d_2},\frac{h_3}{d_3})$ be any Farey triangle, then we have the following results.
    \begin{enumerate}
        \item if $f:\mathbb{E}_{d_1,h_1}\xrightarrow{}\mathbb{E}_{d_2,h_2}$ is a nonzero morphism, then it is injective, and $$\mathrm{coker}(f)\simeq\begin{cases}
            
        \mathbb{E}_{d_3,h_3} & \text{if $h_3\neq 0$;}\\ R^0\tau_*(k_x), &\text{if $h_3= 0$,}\end{cases} $$ where $k_x$ is the skyscraper sheaf of a closed point $x\in X_{FF}$.
        
        \item if $g:\mathbb{E}_{d_2,h_2}\xrightarrow{}\mathbb{E}_{d_3,h_3}$ is a nonzero morphism, then it is surjective and $$\mathrm{ker}(g)\simeq \begin{cases}
            
        \mathbb{E}_{d_1,h_1} & \text{if $h_1\neq 0$}; \\ R^0\tau_*(k_x), &\text{if $h_1= 0$}.\end{cases}$$
        \item if $h:\mathbb{E}_{d_3,h_3}\xrightarrow{}\mathbb{E}_{d_1,h_1}[1]$ is a nonzero morphism, then  $$\mathrm{cone}(h)\simeq \begin{cases}
            
        \mathbb{E}_{d_2,h_2}[1] & \text{if $h_2\neq 0$}; \\ R^0\tau_*(k_x)[1], &\text{if $h_2= 0$}.\end{cases}$$
    \end{enumerate}
\end{lemma}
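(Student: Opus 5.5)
The plan is to transport everything to the derived category of the curve and argue there. By Le Bras' equivalence (Theorem~\ref{Thm: Le bras' thm}) and the derived equivalence $D^b(\mathcal{BC})\simeq D^b(X_{FF})$, each $\mathbb{E}_{d,h}$ corresponds to the object of $\mathrm{Coh}^-(X_{FF})$ listed in Remark~\ref{rmk:corresponding vector bundles}:
\begin{itemize}
\item $\mathcal{O}(d/h)$ if $h>0$;
\item $\mathcal{T}_d$ if $h=0$, and in a Farey triangle $h=0$ forces $d=1$, so this is the skyscraper $k_\infty$;
\item $\mathcal{O}(-d/|h|)[1]$ if $h<0$.
\end{itemize}
Kernels, cokernels and cones in $\mathcal{BC}$ are computed in the heart $\mathrm{Coh}^-(X_{FF})$. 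On the heart we use the additive ``charge'' $v(\mathcal{E})=(d,h)=(\deg,\rank)$ of the underlying complex; it is defined because $\deg$ and $\rank$ are additive on $K_0$. For $\mathcal{E}=\mathcal{O}(-d/h)[1]$ we have $v=(d,-h)$, which matches $\operatorname{Dim}$. We order nonzero objects by the phase $h/d\in\mathbb{Q}_\infty$, which by Remark~\ref{rmk:Farey-triangle-convention} is the natural slope on the tilted heart.

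The first step is to set up a stability theory on $\mathrm{Coh}^-(X_{FF})$ with respect to $v$. A nonzero object has $d\ge 0$, and $d=0$ forces $h>0$, so phases are well defined. We need two facts.

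\emph{(a) Stability.} Each $\mathbb{E}_{d,h}$ with $\gcd(d,h)=1$ is stable for this phase. For vector bundles in $\mathcal{T}$ this follows from Fargues--Fontaine stability: a subobject in the heart of a sheaf in $\mathcal{T}$ is again a sheaf, with $\mathcal{F}$-part zero. For the shifted bundles and the skyscrapers one analyses subobjects in the heart via the long exact cohomology sequence.

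\emph{(b) Classification.} Every semistable object in the heart with primitive charge $(d,h)$ is isomorphic to $\mathbb{E}_{d,h}$, except when $h=0$. In that case it is $k_x$ for some closed point $x$ of degree $1$. This comes from the Fargues--Fontaine classification of vector bundles (semistable of coprime rank and degree implies $\cong\mathcal{O}(\lambda)$), together with the fact that a length-one torsion sheaf is a skyscraper $k_x$. This is exactly why $R^0\tau_*(k_x)$ appears in the statement: for instance, for the fundamental triangle, a nonzero map $\mathcal{O}\to\mathcal{O}(1)$ has cokernel $k_x$ at the zero of the section.

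The core combinatorial input is the Farey condition $h_1d_2-h_2d_1=1$, $v_2=v_1+v_3$, which by Pick's theorem means the lattice triangle with vertices $0$, $v_1$, $v_2$ contains no lattice points other than its vertices.

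For (1), let $f\colon\mathbb{E}_1\to\mathbb{E}_2$ be nonzero, with image $I$ and kernel $K$ in the heart. Stability of $\mathbb{E}_1$ gives $\phi(I)\ge\phi_1$, with equality only if $K=0$. Stability of $\mathbb{E}_2$ gives $\phi(I)\le\phi_2$, with equality only if $I=\mathbb{E}_2$. Since $v(I)$ is a lattice point of the parallelogram cut out by these constraints and lying below $v_1$, the Pick/unimodularity argument forces $v(I)=v_1$, so $K=0$ and $f$ is injective.

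For the cokernel $Q$, note $v(Q)=v_3$ is primitive. We show $Q$ is semistable. A destabilizing quotient $Q\twoheadrightarrow Q''$ would also be a quotient of $\mathbb{E}_2$, so $\phi(Q'')<\phi_2$ by stability of $\mathbb{E}_2$. At the same time $\phi(Q'')<\phi_3$ and its charge is bounded by $v_3$. Via the kernel of $\mathbb{E}_2\to Q''$, this produces a lattice point strictly inside the forbidden triangle, a contradiction. Then (b) identifies $Q$ as $\mathbb{E}_{d_3,h_3}$, or as $R^0\tau_*(k_x)$ when $h_3=0$.

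Part (2) is the dual argument: surjectivity comes from the same slope squeeze, and semistability of the kernel comes from the same lattice argument applied to its subobjects.

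For (3), the nonzero class $h\in\mathrm{Ext}^1(\mathbb{E}_3,\mathbb{E}_1)$ gives a non-split extension $0\to\mathbb{E}_1\to M\to\mathbb{E}_3\to 0$ in the heart, with $\mathrm{cone}(h)\simeq M[1]$ and $v(M)=v_2$ primitive. We show $M$ is semistable. Take a destabilizing subobject $S\subset M$ of maximal phase, and look at its images and intersections with $\mathbb{E}_1$ and $\mathbb{E}_3$. Using stability of these two and the absence of interior lattice points, the only possibility is that $S\to\mathbb{E}_3$ is an isomorphism, which splits the sequence and contradicts $h\neq0$. Then (b) identifies $M$, with the $h_2=0$ case giving $k_x$.

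I expect the main obstacle to be the first step: making the stability and classification facts (a) and (b) rigorous in the tilted heart. This is especially delicate for the shifted bundles $\mathcal{O}(-d/h)[1]$, since subobjects in the heart can mix $H^{-1}$ and $H^0$ parts. I would first try to reduce to sheaf statements by applying the cohomology long exact sequence to a subobject $S\subset\mathcal{O}(\mu)[1]$. This gives $H^{-1}(S)\subset\mathcal{O}(\mu)$ with $H^0(S)\in\mathcal{T}$ a quotient of $\mathcal{O}(\mu)$ by $H^{-1}(S)$. One then checks the phase inequality directly using Fargues--Fontaine stability of $\mathcal{O}(\mu)$ and the slope signs forced by $(\mathcal{T},\mathcal{F})$.
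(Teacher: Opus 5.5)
\textbf{There is a concrete error: the phase is oriented the wrong way, and this breaks the core step.} Your overall strategy is sound: run the Pick/unimodularity argument directly in the tilted heart, using a stability notion there. But take $\phi=h/d$ together with the convention you actually use in (1): a quotient of a stable object has phase $\ge$, a subobject has phase $\le$. Then your two inequalities read $h_1/d_1\le\phi(I)\le h_2/d_2$. This is impossible, since $h_1/d_1>h_2/d_2$ for a Farey triangle, so taken literally your argument shows every $f$ is zero.

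Correspondingly, fact (a) is false for this phase. Consider $\mathcal{O}\hookrightarrow\mathcal{O}(1)$ and $\mathcal{O}\hookrightarrow k_\infty$, i.e.\ $\underline{\mathbb{Q}_p}\hookrightarrow\mathbb{E}_{1,1}$ and $\underline{\mathbb{Q}_p}\hookrightarrow\mathbb{G}_a$ (the latter is Proposition~\ref{prop: real statement}(1)). Both are monomorphisms in the heart whose source has phase $\infty$, larger than the target's. Remark~\ref{rmk:Farey-triangle-convention} says exactly that the order on $h/d$ is the \emph{opposite} of the slope order in $\mathrm{Coh}^-(X_{FF})$.

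Your write-up also mixes the two orientations. In the cokernel step, ``$\phi(Q'')<\phi_2$ by stability of $\mathbb{E}_2$'' contradicts the convention of the preceding paragraph. In (3), the conclusion that the destabilizing subobject maps isomorphically onto $\mathbb{E}_3$ only makes sense if $\mathbb{E}_3$ has the \emph{largest} phase.

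\textbf{The repair.} Use $\psi=\frac1\pi\arg(-\deg+i\,\rank)$, which is decreasing in $h/d$ on the heart. Work with Le Bras' torsion pair: slopes $\ge 0$ in $H^0$, slopes $<0$ in $H^{-1}$. This is the pair for which your claim ``$d=0\Rightarrow h>0$'' holds and $\mathbb{E}_{0,1}\leftrightarrow\mathcal{O}$ lies in the heart; the strict and non-strict inequalities printed in \S2 are interchanged. With this pair, $\mathrm{Coh}^-(X_{FF})=\mathcal{P}([\frac12,\frac32))$ for the Harder--Narasimhan slicing on $D^b(X_{FF})$. The standard slicing argument (semistable objects of the heart are exactly the objects of $\mathcal{P}(\phi)$) then gives (a) and (b) at once. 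It also removes the obstacle you flag: even for $E\in\mathcal{T}$, a subobject in the heart need not be a subsheaf, as $\mathcal{O}\hookrightarrow k_\infty$ shows.

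Now $\psi_1<\psi_2<\psi_3$, and your steps go through:
\begin{enumerate}
\item \emph{Injectivity.} By unimodularity, $v(I)=av_1+bv_2$ with $a,b\in\mathbb{N}$. Since $v_1-v(I)$ is a heart charge and $d_3\ge1$, we get $d(I)\le d_1<d_2$. This forces $b=0$ and then $a=1$.
\item \emph{Cokernel.} A destabilizing quotient $Q''$ of the cokernel has $\psi_2<\psi(Q'')<\psi_3$. Hence $v(Q'')=av_2+bv_3$ with $a,b\ge1$, so $d(Q'')>d_3\ge d(Q'')$, a contradiction.
\item \emph{Part (3).} Hom-vanishing puts the HN phases of $M$ in $[\psi_1,\psi_3]$. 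So the only non-semistable possibility is a first HN factor of charge $v_3$. Such a factor maps isomorphically onto $\mathbb{E}_3$ and splits the extension, contradicting $h\neq 0$.
\end{enumerate}

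\textbf{Comparison with the paper.} The paper turns the statement into one about a sheaf map $\mathcal{O}(d_1/h_1)\to\mathcal{O}(d_2/h_2)$ and writes out only the case $h_1,h_2,h_3>0$. The corrected version of your approach handles all sign configurations uniformly. This includes cases such as $k_\infty\to\mathcal{O}(-2)[1]$ for the triangle $(\frac01,\frac{-1}2,\frac{-1}1)$, where the morphism is an $\mathrm{Ext}^1$-class rather than a map of sheaves.
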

\begin{proof}
    The proofs of these statements are similar, so we only prove (1). Furthermore, for simplicity of the proof, we deal with the case where $h_1,h_2,h_3$ are positive, as the other cases can be treated similarly. By Theorem \ref{Thm: Le bras' thm} and Remark \ref{rmk:corresponding vector bundles}, it suffices to prove that for any nontrivial morphism $f:\mathcal{O}(\frac{d_1}{h_1})\xrightarrow{} \mathcal{O}(\frac{d_2}{h_2})$ in $\mathrm{Coh}(X_{FF})$, it must be injective and $\coker(f)\simeq \mathcal{O}(\frac{d_3}{h_3}).$   

    In fact, for any nonzero morphism $f$, consider its image $\mathrm{im}(f)\subset \mathcal{O}(\frac{d_2}{h_2}).$ Since it is a quotient object of $\mathcal{O}(\frac{d_1}{h_1})$ and a subobject of $\mathcal{O}(\frac{d_2}{h_2})$, we have $\frac{d_1}{h_1}\leq \mu(\mathrm{im}(f))\leq \frac{d_2}{h_2}$ (because of the stability of $\mathcal{O}(\frac{d_1}{h_1})$ and $\mathcal{O}(\frac{d_2}{h_2})$, and $\rank(\mathrm{im}(f))\leq h_1$). As $d_2h_1-d_1h_2=1$, by Pick's theorem, there is no integral point in the interior of the triangle $(0,0),(d_1,h_1),(d_2,h_2)$ in the complex plane. Hence, we have $\mu(\mathrm{im}(f))=\frac{d_1}{h_1}.$ Therefore, $\ker(f)$ is an object with rank and degree equal to $0$, equivalently $\ker(f)\simeq 0$ and $f$ is injective.

    For $\coker(f)$, consider its last Harder--Narasimhan factor and denote this factor by $Q$. By the definition of Harder--Narasimhan filtration, we know that $\mu(Q)\leq \frac{d_3}{h_3}$. As $Q$ is a quotient object of $\coker(f)$, hence a proper quotient object of $\mathcal{O}(\frac{d_2}{h_2})$, this implies that $\mu(Q)>\frac{d_2}{h_2}$ and $\rank(Q)<h_3$. However, there is no integral point in the interior of the triangle $(0,0),(d_2,h_2),(d_3,h_3)$ in the plane. Hence, we have $\mu(Q)=\frac{d_3}{h_3}$ and $\coker(f)\simeq Q$ is semistable, therefore $Q$ is stable by the primitivity of $(d_3,h_3),$ and isomorphic to $\mathcal{O}(\frac{d_3}{h_3})$. This completes the proof. 
\end{proof}

\begin{remark}
 Taking the Farey triangle to be
$\left(\frac{1}{0},\frac{1}{1},\frac{0}{1}\right)$, we obtain
Proposition \ref{prop: real statement}(3) by inductively applying
Lemma \ref{lemma:minimal triangle lemma}(3). Applying the same argument
to the adjacent Farey triangle
$\left(\frac{1}{0},\frac{0}{1},\frac{-1}{1}\right)$ gives
Proposition \ref{prop: real statement}(1). It remains to prove
Proposition \ref{prop: real statement}(2), for which we need the following result.
\end{remark}

By Lemma \ref{lemma:minimal triangle lemma}, we can prove the following result, which gives us a variant of $p$-adic fundamental lemma for any given Farey triangle.

\begin{theorem}\label{thm:SL2 variants}
    For a given Farey triangle $(\frac{h_1}{d_1},\frac{h_2}{d_2},\frac{h_3}{d_3})$ and a non-negative integer $k\in\mathbb{N}$, we have the following results.

    \begin{enumerate}
        \item if $f:\mathbb{E}_{d_1,h_1}\xrightarrow{}\mathbb{E}_{d_2+kd_3,h_2+kh_3}$ is a nonzero morphism, then it is injective and $$\mathrm{coker}(f)\simeq \begin{cases}
            
       \mathbb{E}_{d_3,h_3}^{\bigoplus k+1} & \text{if $h_3\neq 0$};\\ R^0\tau_*(T)& \text{if $h_3=0$}, \end{cases}$$ where $T$ is a torsion sheaf of length $k+1$ on $X_{FF}$.
        \item  if $g:\mathbb{E}_{d_2+kd_1,h_2+kh_1}\xrightarrow{}\mathbb{E}_{d_3,h_3}$ is a nonzero morphism, then it is surjective and $$\mathrm{ker}(g)\simeq \begin{cases}
            
        \mathbb{E}_{d_1,h_1}^{\bigoplus k+1} & \text{if $h_1\neq 0$}; \\ R^0\tau_*(T), &\text{if $h_1= 0$},\end{cases}$$ where $T$ is a torsion sheaf of length $k+1$ on $X_{FF}$.

        \item for any pair of non-negative integers $k_1,k_3\in\mathbb{N}$, let $$t:\mathbb{E}_{d_3+k_3d_2,h_3+k_3 h_2}\xrightarrow{}\mathbb{E}_{d_1+k_1d_2,h_1+k_1h_2}[1]$$ be a nonzero morphism, then  $$\mathrm{cone}(t)\simeq \begin{cases}
            
        \mathbb{E}_{d_2,h_2}[1]^{\bigoplus k_1+k_3+1} & \text{if $h_2\neq 0$}; \\ R^0\tau_*(T)[1], &\text{if $h_2= 0$},\end{cases}$$ where $T$ is a torsion sheaf of length $k_1+k_3+1$ on $X_{FF}$.

    \end{enumerate}
\end{theorem}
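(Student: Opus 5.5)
The plan is to transport everything to $\mathrm{Coh}(X_{FF})$ (or to $D^b(X_{FF})$ for part (3)) through Theorem~\ref{Thm: Le bras' thm} and Remark~\ref{rmk:corresponding vector bundles}, and then repeat the slope and lattice-point argument of Lemma~\ref{lemma:minimal triangle lemma}. I will write out part (1) in the case where $h_1,h_2,h_3>0$. The other sign cases work the same way once the shifts $[1]$ are inserted for negative $h_i$ and torsion sheaves are used when $h_i=0$. Part (2) is dual to part (1): apply the same argument to kernels in place of cokernels, or equivalently apply it to the Farey triangle obtained under the orientation-reversing symmetry of $\widetilde{\Gamma}$. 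Part (3) then follows formally, as explained at the end.

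For part (1), put $v_i=(d_i,h_i)$ and $w=v_2+kv_3$. In the curve picture, $f$ is a nonzero map $\mathcal{O}(d_1/h_1)\to \mathcal{O}(w)$. Since $v_1$ and $v_3$ form a $\mathbb{Z}$-basis with $\det(v_1,v_3)=\pm1$, the vector $w=v_1+(k+1)v_3$ is primitive, because $\gcd(1,k+1)=1$. Hence $\mathcal{O}(w)$ is stable. The determinant $\det(v_1,w)=(k+1)\det(v_1,v_3)$ has absolute value $k+1$, and I expect this to be the main obstacle: Pick's theorem no longer rules out lattice points inside the triangle $(0,v_1,w)$, so the argument of Lemma~\ref{lemma:minimal triangle lemma} does not apply directly.

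To get around this, I will work inside the cone spanned by $v_1$ and $v_3$, which is the same as working in the basis $(v_1,v_3)$ of $\mathbb{Z}^2$.

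1. The image $I$ of $f$ has slope between $\mu(v_1)$ and $\mu(w)$, and its rank is at most $h_1$.
2. Writing $(\deg,\rank)(I)=a v_1+b v_3$, the slope condition forces $a,b\ge 0$, and $\rank I\le h_1$ forces $a\le1$.
3. Since $I$ is a quotient of a stable bundle and is nonzero, we cannot have $a=0$ with $b>0$. So $a=1$, and then $b=0$ because $I$ is a quotient of $\mathcal{O}(v_1)$ with at most that rank. Thus $I\simeq \mathcal{O}(v_1)$ and $f$ is injective.
4. The cokernel $Q$ then has class $(k+1)v_3$.
5. Every quotient of $Q$ is a quotient of $\mathcal{O}(w)$, so it has slope at least $\mu(w)$, which is strictly greater than $\mu(v_1)$. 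In the basis this means its class $a'v_1+b'v_3$ has $b'>0$ relative to $a'$, and a subobject of $Q$ has class $(k+1)v_3$ minus such a quotient class.
6. Apply this to the last Harder--Narasimhan factor of $Q$. Its class lies in the cone, with nonnegative $v_1$-coordinate coming from slope $\ge \mu(w)$ and nonpositive $v_1$-coordinate coming from the fact that its kernel inside $Q$ is again such a quotient. So it is a multiple of $v_3$.
7. Therefore $Q$ is semistable of slope $\mu(v_3)$. By the classification of Fargues--Fontaine, $Q\simeq\mathcal{O}(v_3)^{\oplus k+1}$, or $Q$ is a torsion sheaf of length $k+1$ when $h_3=0$. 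Note that $Q$ has no torsion part unless $h_3=0$, since a torsion quotient would have infinite slope.

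A cleaner route that I will try first is induction on $k$, with Lemma~\ref{lemma:minimal triangle lemma} as the base case $k=0$. The triple $(v_1, v_2+kv_3, v_3)$ (in the $h/d$ order) is itself a Farey triangle for every $k$, because adding $v_3$ to the middle vector preserves determinant $1$. For the triangle $(v_2+(k-1)v_3,\ v_2+kv_3,\ v_3)$, the lemma gives exact sequences $0\to\mathcal{O}(v_2+(k-1)v_3)\to\mathcal{O}(v_2+kv_3)\to\mathcal{O}(v_3)\to0$. Then I will show that $f$ factors through the first map, using $\mathrm{Hom}(\mathcal{O}(v_1),\mathcal{O}(v_3))=0$ since $\mu(v_1)<\mu(v_3)$ in degree/rank order. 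The snake lemma then gives an extension of $\mathcal{O}(v_3)$ by $\mathcal{O}(v_3)^{\oplus k}$. This extension splits because $\mathrm{Ext}^1(\mathcal{O}(v_3),\mathcal{O}(v_3))=H^1(\mathcal{O}(v_3)^\vee\otimes\mathcal{O}(v_3))=0$, as that bundle is semistable of slope $0$. The torsion case is handled by the length count. The only real check is the factorization and the injectivity inherited from the lemma.

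For part (3), I will treat $t$ as an extension class. Its cone is the middle term $M$ of the extension $0\to \mathbb{E}_{v_1+k_1v_2}\to M\to\mathbb{E}_{v_3+k_3v_2}\to0$, shifted by $[1]$, which lives in the appropriate heart. This $M$ has class $(k_1+k_3+1)v_2$. Using parts (1) and (2), I will show that every Harder--Narasimhan piece of $M$ is a multiple of $v_2$, arguing with the cone/basis method above in the basis $(v_1,v_3)$. The Fargues--Fontaine classification then gives $M\simeq\mathcal{O}(v_2)^{\oplus k_1+k_3+1}$, or a torsion sheaf of that length when $h_2=0$. This is where I expect the heaviest bookkeeping: nonvanishing of $t$ has to exclude split summands of other slopes.
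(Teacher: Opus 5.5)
You follow the paper's overall plan: pass to $\mathrm{Coh}(X_{FF})$, bound the image and the last Harder--Narasimhan factor by slope and lattice-point arguments, then apply the classification. But several of your concrete steps fail.

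\emph{Part (1).} The inductive route you would try first breaks at the factorization. $\mathrm{Hom}(\mathcal{O}(v_1),\mathcal{O}(v_3))$ is \emph{not} zero: $\mathrm{Hom}(\mathcal{O}(\lambda),\mathcal{O}(\mu))$ vanishes only when $\lambda>\mu$, and here $\mu(v_1)<\mu(v_3)$. Worse, apply $\mathrm{Hom}(\mathcal{O}(v_1),-)$ to $0\to\mathcal{O}(v_1+kv_3)\to\mathcal{O}(w)\to\mathcal{O}(v_3)\to0$. Since $\mathrm{Ext}^1(\mathcal{O}(v_1),\mathcal{O}(v_1+kv_3))=0$, the map $\mathrm{Hom}(\mathcal{O}(v_1),\mathcal{O}(w))\to\mathrm{Hom}(\mathcal{O}(v_1),\mathcal{O}(v_3))$ is onto a nonzero space, so a general $f$ does not factor. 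Already for the fundamental triangle, $f\colon\mathcal{O}\to\mathcal{O}(k+1)$ factors through a fixed $\mathcal{O}(k)\hookrightarrow\mathcal{O}(k+1)$ only if it vanishes at the point supporting the cokernel. Your side claim that $(v_1,v_2+kv_3,v_3)$ is a Farey triangle is also false for $k\ge1$, since the determinant is $k+1$.

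The obstacle that prompted this detour is not real. The triangle $(0,v_1,w)$ has area $(k+1)/2$ and $k+3$ boundary lattice points, namely $0$ and $v_1+jv_3$ for $0\le j\le k+1$. So Pick's theorem gives no interior points, and the argument of Lemma~\ref{lemma:minimal triangle lemma} goes through, as the paper says.

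In your cone route, steps 1--4 are fine, but the reason given in step 6 is wrong. The kernel of $Q\to Q''$ is a subobject of $Q$, not a quotient of $\mathcal{O}(w)$. Also, slope $\ge\mu(w)$ alone does not give a nonnegative $v_1$-coordinate, because the torsion class is $-h_3v_1+h_1v_3$. What excludes $a'\ge1$ is the rank bound $\operatorname{rank}Q''\le(k+1)h_3$, combined with $\mu(w)<\mu(Q'')\le\mu(v_3)$. These give $a'\ge0$ and $b'\ge(k+1)a'+1$, so $a'\ge1$ would force $\operatorname{rank}Q''\ge h_1+(k+2)h_3$. This rank bound is exactly the ingredient in the paper's Pick argument.

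\emph{Part (3).} This part is only announced, and the step that needs an idea is missing: how $t\neq0$ forces semistability. Invoking parts (1) and (2) does not supply it, and a priori you do not even know that $M$ is a sheaf.

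In the case the paper treats, $h_1+k_1h_2>0>h_3+k_3h_2$, one has $t=\tilde t[1]$ for a nonzero \emph{sheaf map} $\tilde t\colon\mathcal{O}(-(v_3+k_3v_2))\to\mathcal{O}(v_1+k_1v_2)$ between stable bundles. The part (1) argument then applies verbatim, using Pick's theorem on the triangle with vertices $0$, $-(v_3+k_3v_2)$, $v_1+k_1v_2$, of area $(k_1+k_3+1)/2$. It shows that $\tilde t$ is injective with semistable cokernel of class $(k_1+k_3+1)v_2$, and $\mathrm{cone}(t)=\coker(\tilde t)[1]$.

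In the other sign cases, $t$ is a genuine $\mathrm{Ext}^1$-class and you need a splitting argument. For instance, let $a=v_3+k_3v_2$ and $b=v_1+k_1v_2$, both of positive rank, and let $M_1\subset M$ be a maximal destabilizing subsheaf. Its class is $[M_1\cap\mathcal{O}(b)]+[\mathrm{im}(M_1\to\mathcal{O}(a))]$. This class lies strictly above the segment $0\to a+b$ and on or below the broken line $0\to a\to a+b$. The triangle $(0,a,a+b)$ has no interior lattice points, and its edges $0\to a$ and $a\to a+b$ contain only their endpoints, so the class is $a$. A degree count then forces $M_1\cap\mathcal{O}(b)=0$ and $M_1\xrightarrow{\sim}\mathcal{O}(a)$. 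That splits the extension, contradicting $t\neq0$.
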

\begin{proof}
   The proof is essentially the same as the proof of Lemma \ref{lemma:minimal triangle lemma}. We include the proof of (3) for the reader's convenience.

   We only deal with the case where $h_1+k_1h_2>0$, $h_3+k_3h_2<0$ as other cases can be proved similarly. By Remark \ref{rmk:corresponding vector bundles}, we know that the nontrivial morphism $$t:\mathbb{E}_{d_3+k_3d_2,h_3+k_3 h_2}\xrightarrow{}\mathbb{E}_{d_1+k_1d_2,h_1+k_1h_2}[1]$$ corresponds to a morphism $\tilde{t}[1]$, where $$\tilde{t}:\mathcal{O}(\frac{-d_3-k_3d_2}{-h_3-k_3h_2})\xrightarrow{} \mathcal{O}(\frac{d_1+k_1d_2}{h_1+k_1h_2}) $$  is a morphism in the category of coherent sheaves on $X_{FF}$. Without loss of generality, one can assume that $h_2\geq 0;$ then the triangle $$(0,0),(-d_3-k_3d_2,-h_3-k_3h_2),(d_1+k_1d_2,h_1+k_1h_2)$$ in the plane is of area $\frac{k_1+k_3+1}{2}$. By Pick's theorem, this triangle has no integral points in its interior, and the only integral points on the boundary of this triangle consist of $$(-d_3+kd_2,-h_3+kh_2), \text{where $k\in\mathbb{Z}$ and $-k_3< k\leq k_1$} $$ in addition to its vertices.

   Using the same argument as in the proof of Lemma \ref{lemma:minimal triangle lemma}, one can show that $\tilde{t}$ must be injective as a morphism in $\Coh(X_{FF})$, and $\coker(\tilde{t})$ is a semistable coherent sheaf of slope $\frac{d_2}{h_2}$ if $h_2>0$ or of slope $\infty$ if $h_2=0$. Hence, we have $$\coker(\tilde{t})=\begin{cases}
       \mathcal{O}(\frac{d_2}{h_2})^{k_1+k_3+1}, & \text{if $h_2>0$}; \\ T, &\text{if $h_2=0$,} \end{cases}$$ where $T$ is a torsion sheaf of length $k_1+k_3+1$ on $X_{FF}$. We have the following distinguished triangle $$\mathcal{O}(\frac{d_1+k_1d_2}{h_1+k_1h_2})\xrightarrow{} \coker(\tilde{t})\rightarrow \mathcal{O}(\frac{-d_3-k_3d_2}{-h_3-k_3h_2})[1]\xrightarrow[]{\tilde{t}[1]}\mathcal{O}(\frac{d_1+k_1d_2}{h_1+k_1h_2})[1],$$ in the derived category $D(X_{FF}).$ This distinguished triangle is a short exact sequence $$0\rightarrow \mathcal{O}(\frac{d_1+k_1d_2}{h_1+k_1h_2})\xrightarrow{} \coker(\tilde{t})\rightarrow \mathcal{O}(\frac{-d_3-k_3d_2}{-h_3-k_3h_2})[1]\rightarrow 0$$ in $\Coh^-(X_{FF})$, and its extension class is $\tilde{t}[1]$. After applying $R^0\tau_*$, we get a short exact sequence in $\mathcal{BC}$, and its extension class corresponds to $t$ as $D^b(X_{FF})\simeq D^b(\mathcal{BC}).$ This completes the proof.
\end{proof}

\begin{remark}
    If we take the Farey triangle to be the fundamental triangle $(\frac{1}{0},\frac{1}{1},\frac{0}{1})$, Theorem \ref{thm:SL2 variants}(2) is Proposition \ref{prop: real statement}(2).
\end{remark}

\bibliographystyle{alpha}
	\bibliography{bibfile}
\end{document}